\newtheorem{theorem}{Theorem}[section]
\newtheorem{proposition}[theorem]{Proposition}
\newtheorem{lemma}[theorem]{Lemma}
\newtheorem{corollary}[theorem]{Corollary}
\theoremstyle{definition}
\newtheorem{example}[theorem]{Example}
\newtheorem{definition}[theorem]{Definition}
\newcommand\restr[2]{{
  \left.\kern-\nulldelimiterspace 
  #1
  \vphantom{\big|} 
  \right|_{#2} 
  }}
\newcommand{\bigO}{\mathcal{O}}
\newcommand{\folF}{\mathcal{F}}
\DeclareSymbolFont{AMSb}{U}{msb}{m}{n}
\author{Alessandro Passantino}
\title{Numerical conditions for the boundedness of foliated surfaces}
\date{\today}
\begin{document}

\begin{abstract}
We show that the set of Hilbert functions $P(m)=\chi(mK_\folF)$ of 2-dimensional foliated canonical models with fixed $K_\folF^2$, $K_\folF \cdot K_X$ and $i_\mathbb{Q}(\folF)$ is finite. As a consequence, we deduce that two results on the effective birationality and boundedness of foliated canonical models with fixed Hilbert function still hold when only $K_\folF^2$, $K_\folF \cdot K_X$ and $i_\mathbb{Q}(\folF)$ are fixed. We then give examples further investigating the properties of families of canonical models, and study particular cases in which some of the conditions are not necessary. 
\end{abstract}

\address{Università degli Studi di Pavia, Dip. di Matematica, Via Ferrata 5, 27100 Pavia, Italy.}
\email{a.passantino@campus.unimib.it}

\maketitle

\thispagestyle{empty}

\section{Introduction}
Throughout this paper, we work over the field $\mathbb{C}$ of complex numbers.

In recent years, the theory of foliations in algebraic geometry has seen increasing attention from a birational point of view: after works of Brunella, McQuillan, Mendes and others (see \cite{mcquillan08} and \cite{bru15} for an in-depth survey), it has become clear that the birational geometry of a mildly singular foliation $\folF$ (as a saturated subsheaf of $T_X$, closed under Lie bracket) on a variety $X$ is governed by the first Chern class of $\folF$. In view of this, it is natural to investigate the numerical properties of canonical divisors of foliations, in analogy with the study of canonical divisors on singular normal varieties. For example, paralleling the classification of projective varieties, it is known that any smooth foliated surface $(X,\folF)$ with foliated canonical singularities and pseudoeffective canonical divisor $K_\folF$ admits a unique minimal model $(X_m,\folF_m)$ with $K_{\folF_m}$ nef and only foliated canonical singularities. On the other hand, when the focus is restricted to foliations of general type, there are notable differences between the properties of canonical divisors of foliations and varieties: for instance, if the goal is to study families of foliated surfaces, it is worth mentioning that foliations of general type do not always admit canonical models in the usual sense, that is, a model with only foliated canonical singularities such that $K_\folF$ is ample. In fact, McQuillan has shown that at cusp singularities, which are foliated canonical singularities, $K_\folF$ is not $\mathbb{Q}$-Cartier. This issue leads to some substantial differences from the classical approach to varieties of general type, and motivates a weaker notion of canonical model of a foliated surface (as in \cite{mcquillan08}), where $K_\folF$ is only numerically ample if $K_\folF$ is not $\mathbb{Q}$-Cartier (Definition \ref{def:canmod}). Since the projectivity of such models is not known, this approach requires to work in the more general setting of algebraic spaces. The lack of a natural polarisation in the non-$\mathbb{Q}$-Cartier case poses one of the main difficulties towards the construction of bounded families of foliated canonical models, and of a moduli functor describing them.

In this regard, two recent works investigate the properties of families of canonical models: the first, due to Hacon and Langer \cite{langer21}, gives effective birationality for canonical models with given Hilbert function $P(m)=\chi(mK_\folF)$. 

\begin{theorem}[{\cite[Theorem 4.3]{langer21}}]
\label{thm:langer}
Let $P \colon \mathbb{Z}_{\geq 0} \rightarrow \mathbb{Z}$ be an integer-valued function. Then, there exists a constant positive integer $N_P$, only depending on $P(m)$, such that for any canonical model $(X,\folF)$ with $\chi(mK_\folF)=P(m)$ and any integer $M\geq N_P$, $|MK_\folF|$ defines a birational map.
\end{theorem}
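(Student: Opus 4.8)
The plan is to reduce the theorem to an effective separation-of-points statement on a smooth birational model and then to check that every numerical constant that appears depends only on the data encoded by $P$. First I would read off that data. By the Riemann--Roch theorem for foliated surfaces one has, schematically, $\chi(mK_\folF)=\chi(\mathcal{O}_X)+\tfrac12\, mK_\folF\cdot(mK_\folF-K_X)+(\text{bounded local contributions at the foliated canonical singularities})$, so matching with $P(m)$ as $m$ varies shows that fixing $P$ fixes $\chi(\mathcal{O}_X)$ and the volume $K_\folF^2>0$, and bounds $K_\folF\cdot K_X$ (as well as the number and type of the non-$\mathbb{Q}$-Cartier cusps). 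I would then pass to a foliated log resolution $\pi\colon(Y,\folF_Y)\to(X,\folF)$; since $(X,\folF)$ has foliated canonical singularities all discrepancies are $\geq 0$, so $\pi_*$ identifies $H^0(X,mK_\folF)$ with $H^0(Y,mK_{\folF_Y})$, the $\mathbb{Q}$-divisor $K_{\folF_Y}$ is nef and big, and all the birational geometry can be carried out on the smooth surface $Y$ and transported back.

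The next step is effective control of cohomology. By Serre duality $h^2(mK_\folF)=h^0(K_X-mK_\folF)$, and since $K_\folF$ is numerically ample (hence nef) the class $K_X-mK_\folF$ intersects $K_\folF$ negatively once $m>(K_\folF\cdot K_X)/K_\folF^2$; such a class is not pseudoeffective, so $h^2(mK_\folF)=0$ from an explicit bound depending only on $P$. Consequently $h^0(mK_\folF)\geq\chi(mK_\folF)=P(m)$ for all such $m$, so $h^0(mK_\folF)$ grows at least like $\tfrac12 K_\folF^2 m^2$; in particular there is $m_1=m_1(P)$ from which $|mK_\folF|$ maps $X$ onto a surface, since the image cannot be a curve when $h^0$ grows quadratically and $K_\folF^2>0$.

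The heart of the argument is to separate a general pair of points $x,y\in X\setminus Z_0$, where $Z_0\subsetneq X$ is a closed set to be pinned down. On $Y$ I would run the standard tie-breaking argument: because $h^0(m_1K_\folF)$ is quadratically large, for $k$ a bounded multiple of $m_1$ there is a nonzero section of $kK_{\folF_Y}$ vanishing at $x$ (and at $y$) to order $\sim k$, and rescaling its divisor produces an effective $\mathbb{Q}$-divisor $\Delta_0\sim_{\mathbb{Q}}\varepsilon K_\folF$, with $\varepsilon$ bounded in terms of $K_\folF^2$, such that $(Y,\Delta_0)$ is not klt at $x$ and at $y$. Perturbing $\Delta_0$ by a small general member of $|m'K_\folF|$ --- available because $K_\folF$ is big --- I arrange an effective $\Delta\sim_{\mathbb{Q}}\varepsilon' K_\folF$ whose non-klt scheme $Z$ is zero-dimensional, with one reduced point over each of $x$ and $y$. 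One then checks, using the bounds of the first two steps, that $MK_\folF-K_Y-\Delta$ is nef and big for all $M\geq N_P$, so a Nadel-type vanishing --- classical Kawamata--Viehweg on the smooth surface $Y$, or a foliated vanishing theorem where one is available --- gives $H^1(Y,\mathcal{O}_Y(MK_\folF)\otimes\mathcal{J}(\Delta))=0$, hence surjectivity of $H^0(MK_\folF)\to H^0(\mathcal{O}_Z\otimes MK_\folF)=\mathbb{C}^2$. This separates $x$ from $y$, and since $Z_0$ is a proper closed set it follows that $|MK_\folF|$ is birational for every $M\geq N_P$.

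I expect the real obstacle to be making $N_P$ depend on $P$ alone, and not on $(X,\folF)$; two features of the foliated setting fight against this. First, at the cusps $K_\folF$ is not $\mathbb{Q}$-Cartier, so ``$mK_\folF$'', the pullback $\pi^*K_\folF$, and all the intersection numbers above must be understood in the Weil-divisorial/numerical framework of \cite{mcquillan08}: one must replace $\pi^*K_\folF$ by a genuine $\mathbb{Q}$-Cartier boundary on $Y$ and verify that the resulting discrepancies and intersection numbers (including the comparison between $K_Y$ and multiples of $K_{\folF_Y}$) stay controlled by $K_\folF^2$ and $K_\folF\cdot K_X$. Second, since there is no off-the-shelf foliated vanishing theorem, the multiplier-ideal computations are organised on the smooth model $Y$ and then descended; the delicate point is that the curves contracted by $\pi$, together with all ``small-degree'' curves $E$ (the finitely many invariant curves through the foliated canonical singularities, and the Reider-type curves that could obstruct separation), must be shown to be bounded in number and in $K_\folF$-degree purely in terms of $K_\folF^2$ --- this is exactly what lets them be swept into $Z_0$ while keeping the threshold uniform. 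This uniform control of the special curves, rather than the vanishing or the jet-counting per se, is where I expect the essential work to lie.
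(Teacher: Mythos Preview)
This paper does not contain a proof of Theorem~\ref{thm:langer}. The statement is quoted verbatim from \cite[Theorem~4.3]{langer21} and is used only as a black box: once Theorem~\ref{theorem:main} establishes that fixing $K_\folF^2$, $K_\folF\cdot K_X$ and $i_{\mathbb{Q}}(\folF)$ forces the Hilbert function to lie in a finite set, Theorem~\ref{thm:langer} is invoked to deduce Corollary~\ref{cor:hl}. There is therefore nothing in the present paper to compare your proposal against.

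If your goal is to reconstruct the Hacon--Langer proof, a few cautions about your outline. Your extraction step is fine, but note that Riemann--Roch actually \emph{fixes} $K_\folF\cdot K_X$ (from the coefficient of $m$), not merely bounds it, and Proposition~\ref{prop:lansings} shows the local terms $a(x,mK_\folF)$ are eventually periodic in $m$, so $P$ also pins down the index data and the number of non-Gorenstein points. Your Serre-duality step $h^2(mK_\folF)=h^0(K_X-mK_\folF)$ is not literally valid on a normal algebraic space for a Weil divisor that is not $\mathbb{Q}$-Cartier; in the actual argument one first passes to a projective partial resolution (the minimal partial du Val resolution of Definition~\ref{def:mpdvr}) on which $K_\folF$ is $\mathbb{Q}$-Cartier and the Leray spectral sequence together with the vanishing $R^1f_*\mathcal{O}(mK_\folF)=0$ identifies Euler characteristics, rather than working on an arbitrary log resolution. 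Finally, the birationality step in \cite{langer21} is closer to a Reider-type argument on this projective model (using that $3i(\folF)K_\folF+K_X$ is nef and that the index is bounded by $P$) than to a Nadel multiplier-ideal scheme; your worry about controlling the ``small-degree'' curves is exactly right, and it is the bounded index that makes that control uniform, not a direct volume estimate.
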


Building on this, Chen \cite{chen21} shows that if a particular type of partial resolutions of canonical models is considered, then such resolutions belong to a bounded family if their Hilbert function is fixed.

\begin{theorem}[{\cite[Theorem 3.4]{chen21}}]
\label{thm:chen}
Let $S_P$ be the set of minimal partial du Val resolutions of canonical models $(X,\folF)$ of general type (Definition \ref{def:mpdvr}) with fixed Hilbert function $ \chi(mK_{\folF})=P(m)$. Then, $S_P$ is bounded.
\end{theorem}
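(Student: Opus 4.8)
The plan is to combine the effective birationality of Theorem~\ref{thm:langer} with an explicit control of all the discrete invariants of the partial resolution, and then to run the standard ``birational boundedness plus bounded invariants implies boundedness'' argument via Hilbert schemes. The reason for passing to a minimal partial du Val resolution $\pi\colon(Y,\mathcal G)\to(X,\folF)$, rather than working directly with $(X,\folF)$, is that on the \emph{projective} surface $Y$ the foliated canonical divisor $K_{\mathcal G}$ becomes $\mathbb Q$-Cartier, nef and big --- the cusps of $X$ at which $K_\folF$ failed to be $\mathbb Q$-Cartier having been replaced by du Val singularities --- while by construction $K_{\mathcal G}^2=K_\folF^2$, $h^0(mK_{\mathcal G})=h^0(mK_\folF)$ and $\chi(mK_{\mathcal G})=\chi(mK_\folF)=P(m)$ for all $m\geq 0$. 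Thus the whole statement is rephrased as boundedness of the family of pairs $(Y,\mathcal G)$.

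First I would extract numerical data from $P$: by Riemann--Roch on $Y$, $P(m)=\chi(\mathcal O_Y)+\tfrac12 m^2K_{\mathcal G}^2-\tfrac12 mK_{\mathcal G}\cdot K_Y$, so $K_{\mathcal G}^2$, $K_{\mathcal G}\cdot K_Y$ and $\chi(\mathcal O_Y)$ are all determined by $P$. Second --- the technical heart --- I would bound the remaining invariants of $Y$: the configurations of exceptional curves produced when resolving the cusps and the other foliated canonical singularities are severely constrained by canonicity of the pair, so that, combining this local (Hirzebruch--Jung type) analysis with the relation between $K_Y$, $K_{\mathcal G}$ and the separatrices, one bounds $K_Y^2$, the Picard number $\rho(Y)$ and the number of singular points of $Y$, all in terms of $P$. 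Third, I would invoke Theorem~\ref{thm:langer}: there is $N_P$ depending only on $P$ such that, for $M\geq N_P$, the rational map $\varphi$ defined on $Y$ by the mobile part of $|MK_{\mathcal G}|$ (equivalently, by $|MK_\folF|$ pulled back along $\pi$) is birational onto a surface $Z\subset\mathbb P^{h^0(MK_{\mathcal G})-1}$; fixing $M$ just above $N_P$ and using a vanishing theorem for foliated surfaces to get $h^0(MK_{\mathcal G})=P(M)$, both $\dim\mathbb P^{h^0(MK_{\mathcal G})-1}$ and $\deg Z\leq(MK_{\mathcal G})^2=M^2K_\folF^2$ are bounded, so $Z$ ranges in a bounded family.

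Fourth, I would upgrade this to honest boundedness of $(Y,\mathcal G)$. Running the foliated MMP (Brunella--McQuillan) from a resolution of the normalisation $Z^{\nu}$ --- which lies in a bounded family --- reaches the foliated minimal model in at most $\rho(Z^{\nu})-\rho(Y_m)$ steps, and then a bounded number of further $K_{\mathcal G}$-trivial contractions singled out by the du Val condition produces $Y$; by the bounds of the second step both processes have bounded length, so $Y$ ranges in a bounded family and carries a very ample divisor $H_Y$ of bounded degree (for instance $\varphi^{*}\mathcal O(1)$ plus a bounded ample correction separating the $\varphi$-exceptional curves), whence the pairs $(Y,H_Y)$ lie in a bounded component of a Hilbert scheme. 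Finally, the foliation $\mathcal G$ is equivalently a rank-one quotient $T_Y\twoheadrightarrow Q$ with $c_1(Q)=c_1(T_Y)+K_{\mathcal G}$, of bounded degree against $H_Y$; such quotients of fixed Hilbert polynomial are parametrised by a relative Quot scheme of finite type over the bounded family $(Y,H_Y)$, and the loci where the resulting foliated pair is canonical, of general type, and the minimal partial du Val resolution of a canonical model cut out a locally closed subset. This yields the boundedness of $S_P$.

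The hard part will be the second step. Because $K_\folF$ is not $\mathbb Q$-Cartier at the cusps one cannot intersect it directly on $X$, and one is forced into a case analysis of the du Val resolutions of cusp and of the other foliated canonical singularities; it is precisely here that the fine shape of $P$ --- through $\chi(\mathcal O_Y)$ and $K_{\mathcal G}\cdot K_Y$, and, in the refined statement of the present paper, through $K_\folF\cdot K_X$ and $i_{\mathbb Q}(\folF)$ --- is needed to pin down $K_Y^2$ and the number of exceptional components. Secondary obstacles are the vanishing theorem required to identify $h^0(MK_{\mathcal G})$ with $P(M)$ on these singular surfaces, and the openness of the foliated canonical condition in families used at the very end.
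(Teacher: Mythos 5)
This statement is quoted from \cite[Theorem 3.4]{chen21}; the paper you are annotating gives no proof of it, so your proposal has to be measured against Chen's argument (whose key mechanism reappears in the proof of Theorem \ref{theorem:main} here). Measured that way, your sketch has two genuine gaps, both located exactly where you yourself flag ``the hard part.'' First, your Riemann--Roch identity $P(m)=\chi(\mathcal O_Y)+\tfrac12 m^2K_{\mathcal G}^2-\tfrac12 mK_{\mathcal G}\cdot K_Y$ is false on the partial resolution: $K_{\mathcal G}$ is only $\mathbb Q$-Cartier and $Y$ has du Val points, so Theorem \ref{thm:rr} carries the correction terms $\sum_x a(x,mK_{\mathcal G})$, which vanish only at Gorenstein points. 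You therefore cannot read off $\chi(\mathcal O_Y)$ and $K_{\mathcal G}\cdot K_Y$ from $P$ until you have bounded the number of foliation singularities and, crucially, the Cartier index $i(\folF)$ --- but in your ordering those bounds are supposed to come \emph{out} of the ``second step,'' which is only asserted (``severely constrained by canonicity\ldots one bounds $K_Y^2$, $\rho(Y)$,\ldots''). Bounding $i(\folF)$ and the singularities in terms of $P$ is the actual technical core (in Hacon--Langer it occupies Section 2 and \cite[Proposition 4.1]{langer21}, via the explicit values of $a(x,K_\folF)$ in Proposition \ref{prop:lansings} and a Hodge-index bound on $K_Y^2$), and nothing in your sketch supplies it.

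Second, your fourth step does not reconstruct $Y$. The pluricanonical map $\varphi$ is trivial on every du Val exceptional curve and on nothing else you control, so $\varphi^{*}\mathcal O(1)\sim MK_{\mathcal G}$ is big and nef but not ample on $Y$, and the ``bounded ample correction separating the $\varphi$-exceptional curves'' is precisely the missing theorem: one must exhibit an ample divisor on $Y$ of bounded Cartier index and bounded degree, and the one that works is $D=4i(\folF)K_\folF+K_Y$ (ampleness is checked curve by curve using that $K_Y$ is ample over $X$ and the cone theorem bound $K_Y\cdot C\geq -3$, exactly as in the proof of Theorem \ref{theorem:main}); boundedness of $(Y,sD)$ then follows from \cite[Theorem 2.1.2]{kollar85} with no detour through the image surface $Z$. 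Your proposed detour --- resolve $Z^{\nu}$, run the foliated MMP for a bounded number of steps, then perform ``a bounded number of further $K_{\mathcal G}$-trivial contractions'' --- presupposes bounds on $\rho$ that you have not established and, more seriously, gives no mechanism singling out $Y$ among the many crepant partial contractions between the minimal model and the canonical model. Your final Quot-scheme step for bounding the foliations once $(Y,H_Y)$ is bounded is fine and matches the standard argument.
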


A relevant issue with assuming that the Hilbert function is fixed is that, in practice, it is generally unfeasible to compute the Hilbert function of a canonical model. While there is a way to generalise the Riemann-Roch theorem on surfaces to non-Cartier divisors (Theorem \ref{thm:rr}), the formula introduces a sum of terms, related to the non-Cartier points of the divisor, which are in general hard to compute. Although there is a complete description of such terms for the canonical divisor of a foliated canonical model (Proposition \ref{prop:lansings}), calculating the Hilbert function still requires knowing the geometry of the surface and foliation at the singular points. It is then natural to ask whether Theorems \ref{thm:langer} and \ref{thm:chen} still hold under purely numerical assumptions on $K_\folF$ and $K_X$; a positive answer to this question is the main result of this work.

\begin{theorem}[=Theorem \ref{theorem:main}]
\label{theorem:mainintro}
Let $k_1,k_2$ be rational numbers, $s$ a positive integer. Let $\mathcal{H}_{k_1,k_2,s}$ be the set of Hilbert functions $P(m) = \chi(X, mK_\folF)$ of canonical models $(X,\folF)$ of general type such that $K_\folF^2=k_1$, $K_\folF \cdot K_X=k_2$ and $i_\mathbb{Q}(\folF)=s$. Then $\mathcal{H}_{k_1,k_2,s}$ is finite.
\end{theorem}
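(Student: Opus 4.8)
The plan is to feed the generalised Riemann--Roch theorem (Theorem~\ref{thm:rr}) with the explicit description of the singularities of a foliated canonical model (Proposition~\ref{prop:lansings}), so as to write $P(m)=\chi(X,mK_\folF)$ as a fixed quadratic polynomial plus the constant $\chi(\mathcal{O}_X)=P(0)$ plus a bounded periodic error supported at the non-Cartier points of $mK_\folF$; then one bounds $\chi(\mathcal{O}_X)$ and the error term purely in terms of $k_1$, $k_2$ and $s$.

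\emph{Step 1: the Riemann--Roch decomposition.} For a canonical model $(X,\folF)$ as in the statement, Theorem~\ref{thm:rr} gives
\[
P(m)=\chi(\mathcal{O}_X)+\tfrac12\bigl(m^2K_\folF^2-m\,K_\folF\cdot K_X\bigr)+\sum_{p}\alpha_p(m)=\tfrac{k_1}{2}m^2-\tfrac{k_2}{2}m+\chi(\mathcal{O}_X)+\sum_{p}\alpha_p(m),
\]
where $p$ ranges over the singular points of $X$ and $\alpha_p(m)$ is the local correction term attached to the class of $mK_\folF$ in the local class group at $p$, which vanishes whenever $mK_\folF$ is Cartier at $p$. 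The quadratic and linear coefficients are already fixed by the hypotheses, so it remains to show that (i) $\chi(\mathcal{O}_X)$ takes only finitely many values, and (ii) the function $m\mapsto\sum_p\alpha_p(m)$ lies in a finite set.

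\emph{Step 2: the periodic error.} By Proposition~\ref{prop:lansings} the singular points of a foliated canonical model of general type are of two kinds: foliated canonical and $\mathbb{Q}$-Cartier, i.e.\ cyclic quotient singularities whose local Cartier index divides $i_\mathbb{Q}(\folF)=s$ and hence of one of finitely many analytic types; and cusps, for which $\alpha_p(m)$ is given by an explicit expression in the combinatorial data of the resolution cycle. In either case $\alpha_p$ is periodic in $m$ with period bounded in terms of $s$ and of the cyclic/cusp data, and $\lvert\alpha_p(m)\rvert\le\beta_p$ for an explicit bound $\beta_p$ depending only on the type of $p$ (in the $\mathbb{Q}$-Cartier case the $\alpha_p(m)$ are moreover rationals with denominator dividing the local index). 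Consequently, once we also bound the number of singular points and the complexity (e.g.\ the length of the resolution cycle) of the cusps, the function $\sum_p\alpha_p$ has period dividing a fixed integer $N=N(k_1,k_2,s)$ and values in a bounded set of rationals with bounded denominator, hence is one of finitely many functions.

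\emph{Step 3: bounding the singularities, and $\chi(\mathcal{O}_X)$.} The decisive point is therefore to bound the singularities of $(X,\folF)$ using only $k_1,k_2,s$. I would work on the minimal partial du Val resolution $\pi\colon(Y,\folF_Y)\to(X,\folF)$ of Definition~\ref{def:mpdvr}: there $K_{\folF_Y}=\pi^{*}K_\folF$ is nef with $K_{\folF_Y}^2=k_1$, and $K_Y\cdot K_{\folF_Y}$ differs from $k_2$ by a contribution of the du Val points, so it too is controlled; the non-du-Val quotient points of $Y$ and the cusps of $\folF$ lie on negative-definite $\folF_Y$-invariant configurations of rational curves, and adjunction for $\folF_Y$-invariant curves combined with the nefness of $K_{\folF_Y}$ forces the number of such configurations and their lengths to be bounded in terms of $k_1$ and $k_2$. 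Feeding these bounds, the Baum--Bott relation, and the generalised Noether formula on (a further, smooth resolution of) $Y$ back in bounds $e$, $K^2$ and hence $\chi(\mathcal{O}_X)$, which differs from the Euler characteristic of the smooth resolution by the number of cusps (already bounded). This yields (i), completes (ii), and so $\mathcal{H}_{k_1,k_2,s}$ is finite. I expect the main obstacle to be exactly this last step: turning the fixed intersection numbers into an effective bound on the number and complexity of the singular points, and in particular handling the cusps, where $K_\folF$ is not $\mathbb{Q}$-Cartier and the Mumford pullback and adjunction bookkeeping must be carried out with care.
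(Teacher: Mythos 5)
Your Steps 1 and 2 set up the problem exactly as the paper does: reduce, via the generalised Riemann--Roch theorem, to showing that $\chi(\mathcal{O}_X)$ and the number of singular points take only finitely many values. (One small correction: by Proposition \ref{prop:lansings} the contribution of a cusp is the constant $-1$ for all $m>0$, so no resolution-cycle combinatorics enter the error term; only the \emph{count} of cusps and of $\mathbb{Q}$-Gorenstein singular points matters, together with the local indices, which divide $s$.) The gap is exactly where you predicted it: Step 3 is the entire content of the theorem, and the mechanism you propose there does not work as stated. On the minimal partial du Val resolution one has $K_{\mathcal{F}_Y}=\pi^{*}K_{\mathcal{F}}$, so every curve contracted to a singular point of the canonical model is $K_{\mathcal{F}_Y}$-trivial; nefness of $K_{\mathcal{F}_Y}$ and foliated adjunction therefore place no upper bound on the number or length of these negative-definite configurations --- this is precisely the degenerate situation that volume and Hodge-index arguments cannot detect. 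Similarly, invoking Baum--Bott and Noether's formula on a smooth resolution requires controlling $K^2$ and the Euler number of that resolution, i.e.\ the total resolution data of all the singular points, which is circular with what you are trying to bound.

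The paper's route around this obstacle is genuinely different and uses the hypothesis on $K_{\mathcal{F}}\cdot K_X$ in an essential way. On the MPDVR it forms $D=4i(\mathcal{F})K_{\mathcal{F}}+K_X$ and proves $D$ is ample (the exceptional curves are handled by the relative ampleness of $K_X$ over $X_c$, and $K_X$-negative curves by Fujino's bound $K_X\cdot C\ge -3$ on extremal rays); it bounds $K_X^2$ from above by the Hodge index inequality $K_{\mathcal{F}}^2K_X^2\le (K_{\mathcal{F}}\cdot K_X)^2$ and from below by $D^2>0$; it then applies the Koll\'ar--Matsusaka theorem to the ample Cartier divisor $sD$ to force $\chi(\mathcal{O}_X)$ into a finite set. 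Finally, the number of singular points is bounded not by local analysis of invariant curves but by quoting Koll\'ar's boundedness theorem for polarised surfaces applied to $(X,sK_{\mathcal{F}})$, followed by a quasi-finiteness argument on the singular locus of the resulting bounded family. To complete your proof you would need a substitute for these two inputs; I do not see one coming from adjunction, nefness, and index formulas alone.
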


In other words, the Hilbert function of canonical models is for the most part determined by $K_\folF^2$, $K_\folF \cdot K_X$ and $i_\mathbb{Q}(\folF)$ (the index of $K_\folF$ at points where it is $\mathbb{Q}$-Cartier). In particular, Theorems \ref{thm:langer} and \ref{thm:chen} still hold when only $K_\folF^2$, $K_\folF \cdot K_X$ and $i_\mathbb{Q}(\folF)$ are fixed (Corollaries \ref{cor:hl} and \ref{cor:chen}).

We then focus on the $\mathbb{Q}$-Cartier case and try to understand the properties of families of canonical models, in order to study the sharpness of the assumptions of Theorem \ref{theorem:mainintro}, or different conditions under which it still holds. It is known that fixing $K_\folF \cdot K_X$ is necessary, because there exists an unbounded family of algebraically integrable foliations on smooth surfaces which are canonical models and $K_\folF^2$ is fixed (Example \ref{ex:kf2}). We also give an example of a collection of algebraically integrable foliations, belonging to a bounded family, whose leaves have unbounded genus. Lastly, we further study the condition on $i_\mathbb{Q}(\folF)$: although it appears to be necessary, we are still able to show that it is redundant when we restrict the possible type of underlying surfaces.

\begin{theorem}[=Theorems \ref{thm:i1} and \ref{thm:i2}]
Fix rational numbers $k_1,k_2$. Then, the $\mathbb{Q}$-Gorenstein canonical models $(X,\folF)$ of general type with $K_\folF^2 = k_1, K_\folF \cdot K_X= k_2$, whose underlying surface belongs to one of the following, are bounded.
\begin{enumerate}[label=\textnormal{(\roman*)}]
\item{Surfaces with $-K_X$ big and nef.}
\item{Surfaces with $K_X \equiv 0$.}
\item{Algebraically integrable foliations induced by fibrations with reduced fibers.}
\end{enumerate}
\end{theorem}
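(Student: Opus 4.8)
The plan is to reduce the statement to Corollary~\ref{cor:chen}, which already gives boundedness as soon as $K_\folF^2$, $K_\folF\cdot K_X$ and $i_{\mathbb Q}(\folF)$ are all fixed. So it suffices to prove that, under each of the three hypotheses, the index $i_{\mathbb Q}(\folF)$ takes only finitely many values once $k_1=K_\folF^2$ and $k_2=K_\folF\cdot K_X$ are fixed. First I would record a general reduction: since $(X,\folF)$ is $\mathbb Q$-Gorenstein, $K_\folF$ is $\mathbb Q$-Cartier everywhere, so by McQuillan's classification of foliated canonical surface singularities (see also Proposition~\ref{prop:lansings}) the surface $X$ has no cusps and therefore only cyclic quotient singularities; at a point of type $\tfrac1r(1,a)$ the local divisor class group is $\mathbb Z/r$, hence the local index of the Weil divisor $K_\folF$ there divides $r$. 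Thus it is enough to bound, in terms of $k_1$ and $k_2$, the orders $r$ of the quotient singularities of $X$.

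The case~(iii) of a foliation by the fibres of $f\colon X\to B$ with all fibres reduced should be essentially immediate: there are then no multiple fibres, the local classification of foliated canonical singularities of algebraically integrable foliations forces $X$ to have at worst Du Val singularities, and the relative canonical bundle formula gives $K_\folF=K_{X/B}=K_X-f^{*}K_B$ with no correction term; hence $K_\folF$ is Cartier, $i_{\mathbb Q}(\folF)=1$, and Corollary~\ref{cor:chen} applies with $s=1$. (As a consistency check, for a relatively minimal fibration with fibres of genus $g\ge 2$ over a base of genus $b$ the identities $K_{X/B}^2=k_1$ and $K_{X/B}\cdot K_X=k_2$ give $(2b-2)(2g-2)=k_2-k_1$, which already bounds $g$ and $b$ unless $b=1$; the computation of the index sidesteps the genus-one base.) In case~(ii), $K_X\equiv 0$ forces $k_2=0$ automatically, and a klt surface with numerically trivial canonical class belongs to a bounded family — in particular it has boundedly many analytic singularity types — so $i_{\mathbb Q}(\folF)$ is bounded and we again conclude by Corollary~\ref{cor:chen}.

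Case~(i) is the substantial one, because here $X$ itself need \emph{not} lie in a bounded family: $\mathbb P(1,1,n)$ has $-K_X$ ample for every $n$, so one really must exploit the fixed intersection numbers. The plan is to pass to the minimal resolution $\pi\colon\widetilde X\to X$, on which $D:=\pi^{*}K_\folF$ is nef and big with $D^2=k_1$, $D\cdot K_{\widetilde X}=k_2$, and $D\cdot E=0$ for every $\pi$-exceptional curve $E$ (indeed these are exactly the curves orthogonal to $D$, since $K_\folF$ is ample on $X$), and on which $-K_{\widetilde X}=\pi^{*}(-K_X)+(\text{effective})$ is big; hence $\widetilde X$ is rational, its N\'eron--Severi lattice is unimodular, and the Hodge index theorem gives $K_{\widetilde X}^2\le k_2^{2}/k_1$. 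One then has to combine the relations $D^2=k_1$, $D\cdot K_{\widetilde X}=k_2$, $D\cdot E_i=0$ with the foliated-canonical discrepancy inequalities (the discrepancies of $\folF$ along the $E_i$ are $\ge 0$) to bound the self-intersections $E_i^2$ — equivalently the numerical types $\tfrac1r(1,a)$ of the quotient singularities, hence the orders $r$ — using the positivity supplied by $-K_X$ big and nef. Once $i_{\mathbb Q}(\folF)$ is bounded in terms of $k_1,k_2$, Corollary~\ref{cor:chen} finishes the proof.

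I expect this last step, in case~(i), to be the main obstacle. The delicate point is that a quotient singularity of type $\tfrac1r(1,1)$ contributes only a single exceptional curve (of self-intersection $-r$), so bounding the \emph{number} of exceptional curves of $\pi$ is useless — one must bound their self-intersections, i.e.\ control the negative-definite sublattice of $\mathrm{NS}(\widetilde X)$ spanned by the $E_i$ and the way it sits inside $D^{\perp}$, which is precisely where both the foliated-canonical condition on $(X,\folF)$ and the hypothesis that $-K_X$ is big and nef are indispensable. Also essential, in this case as in~(iii), is that \emph{both} $k_1$ and $k_2$ are fixed: dropping $K_\folF\cdot K_X$ already destroys boundedness, as shown by Example~\ref{ex:kf2}.
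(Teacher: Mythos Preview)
Your overall reduction—bound $i_{\mathbb Q}(\folF)$ in terms of $k_1,k_2$ and then invoke Corollary~\ref{cor:chen}—is exactly the scaffold the paper uses. The genuine gap is in case~(i). You propose passing to the minimal resolution $\widetilde X$ and bounding the self-intersections of the exceptional curves via lattice considerations in $\mathrm{NS}(\widetilde X)$; you yourself flag this as the main obstacle and do not carry it out, and indeed it is not clear that the Hodge-index bound $K_{\widetilde X}^2\le k_2^2/k_1$ together with nonnegativity of the foliated discrepancies is enough to control the individual $E_i^2$ (your own observation about $\tfrac1r(1,1)$ illustrates the difficulty). The paper's argument is entirely different and avoids the resolution: since $X$ is klt and $-K_X$ is big and nef, Kawamata--Viehweg vanishing gives $\chi(\mathcal O_X)=1$ and $\chi(K_\folF)=h^0(K_\folF)\ge 0$, so Theorem~\ref{thm:rr} yields
\[
-\sum_{x}a(x,K_\folF)\;=\;\tfrac12 K_\folF\cdot(K_\folF-K_X)-h^0(K_\folF)+1\;\le\;\tfrac12(k_1-k_2)+1.
\]
Each term $-a(x,K_\folF)$ is bounded below by a positive constant (Proposition~\ref{prop:lansings}), so the number of non-Gorenstein points is bounded; it follows that $\sum_x 1/n_x$ takes finitely many values, and then an elementary lemma (\cite[Lemma~3.4]{langer21}) bounds each $n_x$, hence $i(\folF)$. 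This vanishing-plus-Riemann--Roch trick is the key idea you are missing.

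Your sketches for (ii) and (iii) also diverge from the paper and each rests on an unjustified assertion. In~(iii) you claim that reduced fibres force $X$ to have at worst Du~Val singularities, hence $K_\folF$ Cartier; the paper neither claims nor needs this. Instead it uses $K_\folF=K_X-f^*K_C$ to compute $K_X^2=2k_2-k_1$ outright, and then bounds the underlying surfaces by Alexeev's theorem \cite{ale94} (when $g(C)\ge 1$, $K_X=K_\folF+f^*K_C$ is ample with fixed volume; when $g(C)=0$, one builds a klt pair $(X,D)$ with $K_X+D\sim_{\mathbb Q}K_\folF$). In~(ii) you assert that klt surfaces with $K_X\equiv 0$ form a bounded family; the paper sidesteps this by applying \cite[Corollary~1.6]{birkar23} (or equivalently the $\epsilon$-adjoint boundedness of \cite{spicer22}) directly to the polarised pair $(X,K_\folF)$ with fixed $K_\folF^2$.
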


Finally, it is worth remarking that McQuillan's canonical models should be compared with $\epsilon$-adjoint canonical models (\cite{spicer22}, generalised further in \cite{chl24}), which are mildly singular foliated surfaces with $K_\folF+\epsilon K_X$ ample; while they are in general distinct, there are cases where the two notions overlap (for instance, $\mathbb{Q}$-Gorenstein canonical models $(X,\folF)$ such that $K_X \equiv 0$). Unlike McQuillan's models, $\epsilon$-adjoint canonical models are projective and they are bounded when only $(K_\folF+ \epsilon K_X)^2$ is fixed (compare this with the boundedness of canonical models of varieties with fixed volume), but by definition involve a different type of singularities (for example, non-$\mathbb{Q}$-Cartier singularities do not appear in $\epsilon$-adjoint models). Similarities should also be noted with minimal partial Du Val resolutions (Definition \ref{def:mpdvr}), which are used for the main proof of this paper.

\medskip

\noindent \textbf{Acknowledgements.} The author would like to thank Calum Spicer, Roberto Svaldi and Luca Tasin for many useful discussions and comments on the content and exposition of the paper.

\section{Preliminaries}

In the following, a surface is always meant to be a 2-dimensional reduced and irreducible algebraic space over $\mathbb{C}$, unless stated otherwise.

\subsection{Intersection theory on normal surfaces}
\label{subsection:inters}
On complete normal surfaces, there is an intersection pairing on Weil divisors (due to Mumford) which generalises the usual intersection of Cartier divisors (see \cite[Section 1]{sakai84} for a further explanation).

Let $X$ be a complete normal surface, $f \colon Y \rightarrow X$ a proper birational morphism from a smooth surface $Y$, and $E = \sum E_i$ the exceptional divisor of $f$. Then, since the matrix $(E_i \cdot E_j)$ is negative definite, for any $\mathbb{R}$-divisor $D$ there exist unique real numbers $x_i$ such that, for all exceptional curves $E_j$, $(f_*^{-1}D+\sum{x_i E_i} )\cdot E_j = 0$ with $f_*^{-1}D$ being the strict transform of $D$; define $f^*D$ to be $f_*^{-1}D+\sum{x_i E_i}$. Then, given two $\mathbb{R}$-divisors $D_1,D_2$, their intersection is defined as $$D_1 \cdot D_2 = f^*D_1 \cdot f^* D_2,$$ where the latter is the usual intersection of $\mathbb{R}$-divisors on a smooth surface. In a similar fashion, it is possible to define the pullback $f^*D$ of any $\mathbb{R}$-divisor whenever $f$ is only a birational morphism of normal surfaces. A Weil $\mathbb{R}$-divisor $D$ is nef if $D \cdot C \geq 0$ for any irreducible curve $C$.

Under the previous pairing, the Hodge index theorem still holds for normal surfaces. We will require it in the following version.

\begin{lemma}[{\cite[Lemma 1.3]{langer21}}]
\label{lemma:hodge}
Suppose that $D_1,D_2$ are $\mathbb{R}$-divisors on a normal surface $Y$ such that $(a_1D_1+a_2D_2)^2>0$ for some $a_1,a_2 \in \mathbb{R}$. Then,
$$D_1^2D_2^2 \leq (D_1 \cdot D_2)^2,$$
where equality holds if and only if there is a non-zero linear combination of $D_1$ and $D_2$ numerically equivalent to 0.
\end{lemma}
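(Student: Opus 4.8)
The plan is to reduce to a smooth projective surface by means of Mumford's pullback and then deduce everything from the classical Hodge index theorem via elementary linear algebra on the plane $V \subseteq N^1(Y)_{\mathbb{R}}$ spanned by the numerical classes of $D_1$ and $D_2$. First I would fix a resolution of singularities $f\colon \widetilde{Y} \to Y$ with $\widetilde{Y}$ smooth; by construction, Mumford's intersection numbers $D_i^2$, $D_1\cdot D_2$ and the numerical triviality of linear combinations of $D_1,D_2$ are all read off from the pullbacks $f^*D_i$ on $\widetilde{Y}$ (and a complete normal surface is in any case a projective scheme), so there is no loss of generality in assuming that $Y$ itself is smooth and projective. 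Set $\Delta := D_1^2 D_2^2 - (D_1\cdot D_2)^2$, so that the asserted inequality is precisely $\Delta \le 0$, and observe that $\Delta$ is the Gram determinant of the intersection form restricted to $V$ (intersection numbers depending only on numerical classes).

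If $D_1$ and $D_2$ are numerically dependent, say $\lambda_1 D_1 + \lambda_2 D_2 \equiv 0$ with $(\lambda_1,\lambda_2)\neq 0$, then either $D_1 \equiv 0$ (and both sides of the inequality vanish) or $D_2 \equiv c D_1$ for some $c\in\mathbb{R}$, in which case a direct substitution gives $D_1^2 D_2^2 = c^2 (D_1^2)^2 = (D_1\cdot D_2)^2$; so in this case equality holds, and $\lambda_1 D_1 + \lambda_2 D_2$ is the required nonzero numerically trivial combination. Now assume $D_1,D_2$ are numerically independent, so $\dim V = 2$, and recall that by hypothesis $A := a_1 D_1 + a_2 D_2$ has $A^2 > 0$ for suitable $a_1,a_2$. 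The restriction of the intersection form to $V$ cannot be positive definite — otherwise $V$ would be a two-dimensional positive-definite subspace of $N^1(Y)_{\mathbb{R}}$, contradicting the Hodge index theorem — and it cannot be negative semidefinite either, since $A\in V$ has $A^2>0$; hence $\Delta \le 0$, which is the inequality. Moreover, if $\Delta = 0$ then the form on $V$ is degenerate, so there is $0\neq v\in V$ with $v\cdot D_1 = v\cdot D_2 = 0$, whence $v\cdot A = 0$ and $v^2 = 0$; but the orthogonal decomposition $N^1(Y)_{\mathbb{R}} = \langle A\rangle \oplus A^{\perp}$ together with the Hodge index theorem forces $A^{\perp}$ to be negative definite, so $v^2 = 0$ gives $v \equiv 0$, contradicting the independence of $D_1,D_2$. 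Thus $\Delta < 0$ strictly in this case. Combining the two cases gives the inequality in general and the stated characterization: equality holds exactly when $D_1,D_2$ are numerically dependent, i.e.\ when some nonzero combination of them is numerically trivial.

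The argument is short and its only substantive input is the Hodge index theorem itself, so I do not anticipate a serious obstacle. The point requiring the most care is the equality clause: the determinant inequality $\Delta \le 0$ alone does not suffice, and one must invoke the full signature statement — equivalently the ``light cone lemma'' that a class orthogonal to a class of positive self-intersection has non-positive self-intersection, with equality only for a numerically trivial class — to rule out a degenerate but nonzero direction in $V$. A secondary, more bookkeeping-type point is to make sure Mumford's pairing and the Hodge index theorem are legitimately available in the present (a priori non-projective, algebraic-space) setting, which is precisely what the reduction to a smooth projective resolution handles.
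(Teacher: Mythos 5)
Your proof is correct and is essentially the argument behind the cited result --- the paper itself gives no proof of Lemma \ref{lemma:hodge}, deferring entirely to \cite[Lemma 1.3]{langer21}, whose proof is the same reduction: pull back to a smooth projective resolution, where Mumford's pairing becomes the usual one, and apply the signature form of the Hodge index theorem, with the light-cone lemma ($v\cdot A=0$, $A^2>0$ $\Rightarrow$ $v^2\leq 0$ with equality only for $v\equiv 0$) settling the equality case. The only blemish is the parenthetical claim that a complete normal surface is automatically a projective scheme, which is false in general (especially in the algebraic-space setting of this paper), but your argument never uses it, since you work on the smooth complete resolution, which is indeed projective.
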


\subsection{Riemann-Roch theorem for normal surfaces}
\label{section:rr}
On non-singular surfaces, the Riemann-Roch theorem is fundamental to study the Euler characteristic and Hilbert polynomial of a divisor. This can still be done on singular surfaces, using a more general form of the theorem.

\begin{theorem}[\cite{reid87}, \cite{langer00}]
\label{thm:rr}
Let $X$ be a complete normal surface, $D$ a Weil divisor on $X$.
Then,
$$\chi(X, D)= \frac{1}{2}(D^2 -K_X \cdot D) + \chi(X,\bigO_X) + \sum_{x \in \mathrm{Sing}X} a(x,D),$$
where each $a(x,D)$ depends only on the local isomorphism class of the reflexive sheaf $\mathcal{O}_X(D)$ at $x$.
\end{theorem}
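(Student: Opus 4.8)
The plan is to reduce the formula to the classical Riemann--Roch theorem on a resolution of $X$, and then to show that the resulting discrepancy between the two sides breaks up into contributions that are local at the singular points. \textbf{Step 1 (passage to a resolution).} Fix a resolution of singularities $f\colon Y\to X$ with exceptional divisor $E=\sum_i E_i$, and set $\tilde D:=\lceil f^*D\rceil$, where $f^*D$ is the Mumford pullback of Subsection~\ref{subsection:inters} (so $f^*D\cdot E_j=0$ for all $j$). Comparing orders of vanishing along the $E_i$ and using that $\mathcal{O}_X(D)$ is reflexive — hence recovered from its restriction to the smooth locus — one checks that $f_*\mathcal{O}_Y(\tilde D)=\mathcal{O}_X(D)$. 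Since the fibres of $f$ have dimension $\le 1$ we have $R^{\ge 2}f_*\mathcal{O}_Y(\tilde D)=0$, and $R^1f_*\mathcal{O}_Y(\tilde D)$ is a coherent sheaf supported on the finite set $\mathrm{Sing}\,X$, so $\chi(X,R^1f_*\mathcal{O}_Y(\tilde D))=\sum_x q_x(\tilde D)$ with $q_x(\tilde D):=\dim_{\mathbb C}(R^1f_*\mathcal{O}_Y(\tilde D))_x$. The Leray spectral sequence for $f$ then gives
$$\chi(X,D)=\chi\bigl(X,f_*\mathcal{O}_Y(\tilde D)\bigr)=\chi(Y,\tilde D)+\sum_{x\in\mathrm{Sing}\,X}q_x(\tilde D).$$
Specialising to $D=0$ (so $\tilde D=0$) yields $\chi(Y,\mathcal{O}_Y)=\chi(X,\mathcal{O}_X)-\sum_x p_x$, where $p_x:=\dim_{\mathbb C}(R^1f_*\mathcal{O}_Y)_x$ is the geometric genus of the singularity $x$.

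\textbf{Step 2 (Riemann--Roch on $Y$, and intersection bookkeeping).} Since $Y$ is smooth and complete, classical Riemann--Roch gives $\chi(Y,\tilde D)=\tfrac12(\tilde D^2-K_Y\cdot\tilde D)+\chi(Y,\mathcal{O}_Y)$. Write $\tilde D=f^*D+\Delta$ with $\Delta=\tilde D-f^*D=\sum_i\delta_i E_i$ exceptional, and $K_Y=f^*K_X+\sum_i a_i E_i$ with $a_i$ the discrepancies. As $f^*D\cdot E_j=f^*K_X\cdot E_j=0$ for all $j$, the mixed terms vanish and, by definition of the Mumford pairing on $X$,
$$\tilde D^2=D^2+\Delta^2,\qquad K_Y\cdot\tilde D=K_X\cdot D+\Bigl(\textstyle\sum_i a_i E_i\Bigr)\cdot\Delta,$$
whence $\tfrac12(\tilde D^2-K_Y\cdot\tilde D)=\tfrac12(D^2-K_X\cdot D)+\tfrac12\bigl(\Delta^2-(\sum_i a_iE_i)\cdot\Delta\bigr)$. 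Plugging this and Step~1 into the Riemann--Roch formula on $Y$ produces exactly the asserted identity, with
$$a(x,D)=\tfrac12\Bigl(\Delta_x^2-\bigl(\textstyle\sum_{E_i\subseteq f^{-1}(x)}a_i E_i\bigr)\cdot\Delta_x\Bigr)+q_x(\tilde D)-p_x,$$
where $\Delta_x$ is the part of $\Delta$ supported over $x$ and we use that the intersection form on $E$ is block diagonal with respect to the partition of the $E_i$ by the point of $\mathrm{Sing}\,X$ they lie above.

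\textbf{Step 3 (locality of $a(x,D)$).} It remains to check that $a(x,D)$ depends only on the local isomorphism class of $\mathcal{O}_X(D)$ at $x$. Every term above is computed from $f^{-1}(U)\to U$ for an arbitrarily small neighbourhood $U$ of $x$: the $\delta_i$ are determined by the numbers $f_*^{-1}D\cdot E_j$ together with the negative-definite matrix $(E_i\cdot E_j)_{E_i,E_j\subseteq f^{-1}(x)}$; the $a_i$ and $p_x$ depend only on $X$ near $x$; and $q_x(\tilde D)$ is computed, via the theorem on formal functions, from the formal completion of $X$ at $x$ together with $\mathcal{O}_X(D)$ near $x$. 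Furthermore, if $\mathcal{O}_X(D')\cong\mathcal{O}_X(D)$ on a neighbourhood $U$ of $x$, then $D'|_U-D|_U=\mathrm{div}_U(\varphi)$ for some $\varphi\in K(X)^{\ast}$, so $\tilde D'|_{f^{-1}(U)}-\tilde D|_{f^{-1}(U)}=f^*\mathrm{div}_U(\varphi)$ is principal; this leaves $\mathcal{O}_Y(\tilde D)|_{f^{-1}(U)}$ — hence $q_x$ — and also $\Delta_x=\tilde D-f^*D$ unchanged. Finally, independence of the resolution follows because any two resolutions are dominated by a common one, so it is enough to verify invariance under a single further blow-up $g\colon Y'\to Y$ of a closed point; there $\chi(Y',\,\cdot\,)=\chi(Y,\,\cdot\,)$, $R^1g_*\mathcal{O}_{Y'}=0$, and $D^2$, $K_X\cdot D$, $p_x$, $q_x$ are preserved while the exceptional corrections transform compatibly — a routine check.

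\textbf{Main difficulty.} The conceptual content is contained in Steps~1 and~2; the genuinely delicate step is Step~3 — verifying that the accumulated correction $a(x,D)$ is honestly local, i.e.\ independent both of the global geometry of $X$ away from $x$ and of the choice of resolution, so that it may legitimately be ascribed to the reflexive sheaf $\mathcal{O}_X(D)$ in a neighbourhood of the single point $x$.
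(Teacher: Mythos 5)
The paper does not prove this statement: it is quoted from the references [Rei87] and [Lan00] as a known result, so there is no internal proof to compare against. Judged on its own, your argument is the standard proof of singular Riemann--Roch on normal surfaces (essentially the one in Langer's and Blache's work) and it is correct. The key points all check out: $f_*\mathcal{O}_Y(\lceil f^*D\rceil)=\mathcal{O}_X(D)$ follows, as you indicate, from reflexivity for the inclusion $\subseteq$ and from the fact that the Mumford pullback of an effective divisor is effective (the negativity lemma applied to the exceptional part) for $\supseteq$; the Leray computation and the intersection bookkeeping in Step 2 are exact, using $(f^*D)\cdot E_j=(f^*K_X)\cdot E_j=0$ and $(f^*D)^2=D^2$ by definition of the pairing; and the block-diagonality of the exceptional intersection form over the points of $\mathrm{Sing}\,X$ correctly distributes the correction among the singular points. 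Two small points worth making explicit: you should take $f$ to be an isomorphism over $X\setminus\mathrm{Sing}\,X$ (e.g.\ the minimal resolution) so that $R^1f_*\mathcal{O}_Y(\tilde D)$ and $\Delta$ are genuinely supported over singular points, with the general case then following from your blow-up invariance; and for that invariance the cleanest route is to observe that the global identity forces the \emph{total} correction $\sum_x a(x,D)$ to be resolution-independent, after which the locality established earlier in Step 3 (everything is computed from $f^{-1}(U)\to U$) pins down each individual $a(x,D)$. You correctly identify Step 3 as the only genuinely delicate part.
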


As a particular case, if $D$ is Cartier at $x$ then $a(x,D)=0$: this recovers the formula of the classical Riemann-Roch theorem.

In general, computing $a(x,D)$ is not simple. Still, for the scope of this work, where $D$ is the canonical divisor of a foliation, a complete description is known (Proposition \ref{prop:lansings}).

\subsection{The Kollár-Matsusaka Theorem}
For any big and semiample Cartier divisor $D$ on a normal projective variety, there exists an upper bound on $h^0(mD)$ only depending on the top two coefficient of $P(m)=\chi(X,mD)$, namely $D^n$ and $D^{n-1} \cdot K_X$: this is a consequence of the following theorem.

\begin{theorem} [{\cite[Theorem 2]{km83}}]
\label{theorem:km}
Let $X$ be a normal projective variety of dimension $n$, $D$ a big and semiample Cartier divisor. Then there is a polynomial $Q(m)$ of degree $n-1$, uniquely determined by $D^n$ and $K_X \cdot D^{n-1}$, such that 
$$|h^0(X, mD) - \frac{D^n m^n}{n!}| \leq Q(m).$$
\end{theorem}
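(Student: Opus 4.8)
This is a theorem of Kollár and Matsusaka, so I only indicate the strategy I would follow to reconstruct it. The plan is to argue by induction on $n = \dim X$ after a preliminary reduction. Since $D$ is big and semiample, its section ring is finitely generated, so there is a birational contraction $\psi\colon X \to Z$ onto a normal projective variety with $\psi_*\bigO_X = \bigO_Z$ and an ample $\mathbb{Q}$-Cartier divisor $D_Z$ on $Z$ with $D \sim_{\mathbb{Q}} \psi^* D_Z$; then $h^0(X, mD) = h^0(Z, mD_Z)$ for all $m$, and because $D$ is numerically trivial on the fibres of $\psi$ one has $D_Z^n = D^n$ and $K_Z \cdot D_Z^{n-1} = K_X \cdot D^{n-1}$ (the exceptional corrections to $K_X$ meet $D_Z^{n-1}$ trivially). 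So it suffices to bound $h^0(Z, mD_Z)$, i.e.\ I may assume $D$ is ample and $\mathbb{Q}$-Cartier; I keep writing $X, D$.

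The base case $n = 1$ is elementary: $X$ is a smooth curve of genus $g$ with $\deg K_X = K_X \cdot D^0 = 2g - 2$ and $d := D^n = \deg D > 0$, Riemann--Roch gives $h^0(mD) = md + 1 - g + h^0(K_X - mD)$ with the last term vanishing once $md > 2g - 2$, and for the finitely many smaller $m$ one uses $0 \le h^0(mD) \le md + 1$; this produces a constant $Q$ — a degree-$(n-1)$ polynomial — built only from $d$ and $\deg K_X$. For $n \ge 2$ I would pass to a very ample multiple $\ell D$, take a general $Y \in |\ell D|$ (normal of dimension $n-1$ by Bertini for normal varieties, with $D|_Y$ ample), and use that the data feeding the inductive hypothesis on $(Y, D|_Y)$, namely $(D|_Y)^{n-1} = \ell D^n$ and $K_Y \cdot (D|_Y)^{n-2} = \ell K_X \cdot D^{n-1} + \ell^2 D^n$, are again functions of $D^n$ and $K_X \cdot D^{n-1}$. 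From the restriction sequences $0 \to \bigO_X((j - \ell)D) \to \bigO_X(jD) \to \bigO_Y(jD) \to 0$, telescoped in steps of $\ell$, one gets the upper bound $h^0(X, mD) \le \frac{D^n}{n!}m^n + Q_+(m)$ with $Q_+$ of degree $n-1$ inherited from $Y$ — the easy half. For the matching lower bound one uses $h^0(X, mD) \ge \chi(X, mD)$ in the range where the higher cohomology of $mD$ vanishes, writes $\chi(X, mD) = \frac{D^n}{n!}m^n - \frac{K_X \cdot D^{n-1}}{2(n-1)!}m^{n-1} + R(m)$ with $\deg R \le n - 2$ by asymptotic Riemann--Roch on a resolution, and must dominate $|R(m)|$ and the small-$m$ values by a polynomial in $D^n$ and $K_X \cdot D^{n-1}$ only. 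Here every estimate is traded for a Hodge-index inequality — Lemma \ref{lemma:hodge} and its higher-dimensional analogues applied to $D$ and $K_X$ on $X$ and on the sections — together with the inductive bound, so that the subleading Hilbert-polynomial coefficients stay trapped between two polynomials in the two intersection numbers; unwinding the recursion yields the explicit $Q$.

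The real content, and the main obstacle, is uniformity: every constant must be independent of $X$, which forbids using Serre-vanishing thresholds, Cartier indices, or very-ampleness bounds as they stand, since these depend on $X$. The Kollár--Matsusaka mechanism replaces each of them by a quantity controlled through $D^n$, $K_X \cdot D^{n-1}$ and $n$, the engine being iterated Hodge-index inequalities at each stage of the dimension induction. Concretely, the two places where one genuinely has to work are (i) making the choice of the auxiliary divisor and of the vanishing range on $X$ (resp.\ $Z$) uniform — handled by effective vanishing whose thresholds are themselves bounded by intersection numbers — and (ii) trapping the lower-order coefficients of $\chi(X, mD)$ between explicit polynomials in $D^n$ and $K_X \cdot D^{n-1}$; once (ii) is in place, the upper bound from the restriction sequences and the lower bound from $\chi$ combine to give the asserted two-sided estimate.
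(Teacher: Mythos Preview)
The paper does not prove this theorem; it is quoted from \cite{km83} as a preliminary black box and invoked later in the proof of Theorem~\ref{theorem:main}. There is therefore nothing in the paper to compare your argument against.

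That said, your sketch is a faithful outline of the original Koll\'ar--Matsusaka strategy: reduce from big and semiample to ample via the contraction defined by $|mD|$, then induct on $n$ by cutting with a general member of a suitable multiple of $D$, feeding the restriction sequences into the upper bound and asymptotic Riemann--Roch into the lower bound, and --- the genuine content --- replacing every $X$-dependent constant (vanishing thresholds, very-ampleness bounds, lower-order Hilbert coefficients) by a quantity controlled through iterated Hodge-index inequalities in $D^n$ and $K_X\cdot D^{n-1}$. You correctly flag this uniformity as the heart of the matter. Two small points would need care in a full write-up: in your reduction step $D_Z$ is a priori only $\mathbb{Q}$-Cartier, not Cartier, so the equality $h^0(X,mD)=h^0(Z,mD_Z)$ and the subsequent induction need to track an index; and the identity $K_Z\cdot D_Z^{n-1}=K_X\cdot D^{n-1}$ requires either $Z$ to be $\mathbb{Q}$-Gorenstein or an interpretation of the left side by restricting to a curve cut by general sections. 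Both are handled in \cite{km83}, and neither affects the architecture you describe.
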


When $D$ is ample and Cartier, this  translates to an upper bound on its Hilbert polynomial, thanks to the vanishing of higher degree cohomology groups of $mD$ for large values of $m$.

\subsection{Foliations}

We now give some definitions and properties of foliations and their singularities. A more detailed description of the birational point of view for foliations can be found in \cite{bru15} and \cite{mcquillan08}.

A \emph{foliation} $\folF$ of rank 1 on a normal surface $X$ is given by a rank 1 subsheaf $T_\folF$ of the tangent sheaf $T_X$ of $X$ which is saturated, that is, $T_X/T_\folF$ is torsion free. A singular point of $\folF$ is either a singular point of $X$ or a point at which $T_X/T_\folF$ is not locally free. A \emph{foliated surface} is a pair $(X,\folF)$ of a surface $X$ and a foliation $\folF$ on $X$.

    A Weil divisor $K_\folF$ such that $\bigO(-K_\folF)\simeq T_\folF$ is called the \emph{canonical divisor} of $\folF$.
    
The \emph{Kodaira dimension} of $\folF$ is given by
    $$\kappa(\folF) \coloneqq \kappa(K_\folF) = \max \{\dim \phi_{mK_\folF}(X) \mid m \in \mathbb{N}\},$$
    where $\phi_{mK_\folF}$ is the $m$-th pluricanonical map induced by $mK_\folF$. If $h^0(mK_\folF)=0$ for any $m>0$, we set $\kappa(\folF) = -\infty$. When $\kappa(\folF)$ is maximal, we say that $\folF$ is of general type.
\\

Given a dominant rational map $f \colon Y \dashrightarrow{} X$ and a foliation $\folF$ of rank $r$ on $X$, it is possible to define a \emph{pullback foliation} $f^*\folF$ on $Y$, as in \cite[Section 3.2]{druel}.

For a birational map $g \colon Y \xrightarrow{} X$, the \emph{pushforward} foliation $g_*(\folF)$ of a foliation $\folF$ on $Y$ is given by $g_*(\folF)=(g^{-1})^*\folF$.

Given a dominant rational map $f \colon Y \xrightarrow{} X$, the pullback foliation of the foliation by points on $X$ (that is, $T_\folF = 0$) is called the foliation \emph{induced by} $f$. A foliation induced by a dominant rational map is said to be \emph{algebraically integrable}. When $f \colon Y \rightarrow X$ is an equidimensional morphism and $\mathcal{F}$ is the foliation induced by $f$, we have
$$K_\folF \sim K_{Y/X} - R(f),$$
where $R(f)$ is the \emph{ramification divisor} of $f$, defined as
$$R(f) = \sum_D (f^*D-f^{-1}(D)),$$
the sum running through all the prime divisors on $X$. In particular, if the fibers of $f$ are reduced, then $K_\folF=K_{Y/X}$.

\begin{definition}
A collection $\mathcal{S}$ of projective foliated surfaces is \emph{bounded} if there exists a projective morphism of quasi-projective varieties of finite type $f \colon \mathcal{X} \rightarrow \mathcal{T}$, together with a rank one foliation $\folF$ on $\mathcal{X}$ such that 
\begin{itemize}
    \item{$\folF \subset T_{\mathcal{X}/\mathcal{T}}$;}
    \item{ for any $(X,\folF) \in \mathcal{S}$ there exists $t \in \mathcal{T}$ such that $(X,\folF) \cong (\mathcal{X}_t, \folF_t)$, where $\folF_t = (\restr{\folF}{\mathcal{X}_t})^{**}$.}
    \end{itemize}

\end{definition}

\subsection{Canonical singularities and the Riemann-Roch theorem}
\label{subsection:cansings}
Let $(X,\folF)$ be a foliated normal surface, and $p \colon Y \rightarrow X$ a proper birational morphism. For any divisor $E$ on $Y$, let $a(E,X,\folF)=\text{ord}_E(K_{p^*\folF}-f^*K_\folF)$ be the \emph{discrepancy} of $\folF$ along $E$. We say that a point $x \in X$ is a \emph{canonical} (resp. \emph{terminal}) singularity of $(X,\folF)$ if $a(E,X,\folF) \geq 0$ (resp. $>0$) for every divisor $E$ over $X$.

If $K_\folF$ is Cartier (resp. $\mathbb{Q}$-Cartier) at a point $x$, we say that $\folF$ is \emph{Gorenstein} (resp. $\mathbb{Q}$-Gorenstein) at $x$, or equivalently that $x$ is a Gorenstein (resp. $\mathbb{Q}$-Gorenstein) point of $(X,\folF)$. 
    The \emph{index} $i(\folF)$ of a foliation $\folF$ is the smallest positive integer $m$ such that $mK_\folF$ is Cartier (we set $i(\folF)=\infty$ if $\folF$ is not $\mathbb{Q}$-Gorenstein). The $\mathbb{Q}$-\emph{index} $i_\mathbb{Q}(\folF)$ is the smallest positive integer $m$ such that $mK_\folF$ is Cartier at the $\mathbb{Q}$-Gorenstein points. 
\\

In order to study the Hilbert function $P(m)=\chi(X,mK_\folF)$ of a foliated surface $(X,\folF)$ by using Theorem \ref{thm:rr}, it is necessary to compute the terms $a(x,K_\folF)$ for all singular points $x$ of $(X,\folF)$. For canonical singularities, this was fully done in \cite{langer21}, so we recall the relevant results in the following proposition. The computations rely on a formal description of terminal and canonical singularities, for which we refer to \cite[Corollary I.2.2 and Fact I.2.4]{mcquillan08}.

\begin{proposition}[{\cite[Section 2]{langer21}}]
\label{prop:lansings}
Let $x$ be a terminal or canonical foliation singularity of a foliated surface $(X,\folF)$. Then:
\begin{itemize} 
\item{if $x$ is a terminal singularity, then $a(x, K_\folF) = -\frac{n-1}{2n}$, where $n$ is the index of $x$;}
\item{if $x$ is a canonical non-terminal $\mathbb{Q}$-Gorenstein singularity, then either $a(x,mK_\folF)=0$ for all $m$, or $a(x,mK_\folF) = -\frac{1}{2}$ for odd $m$, and $0$ otherwise;}
\item{If $x$ is a canonical singularity such that $\folF$ is non-$\mathbb{Q}$-Gorenstein at $x$, then $a(x,mK_\folF) = -1$ for $m>0$ and $0$ for $m=0$.}
\end{itemize}
\end{proposition}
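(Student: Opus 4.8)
The plan is to establish the three cases separately by going through the formal classification of terminal and canonical foliated surface singularities in \cite[Corollary I.2.2 and Fact I.2.4]{mcquillan08}, which up to formal isomorphism reduces the problem to a short explicit list of normal forms. By Theorem~\ref{thm:rr} the term to compute, $a(x,mK_\folF)$, is local around $x$ and depends only on the local isomorphism class of the reflexive sheaf $\mathcal{O}_X(mK_\folF)$ at $x$; so it suffices to treat each normal form on a formal neighbourhood of $x$, and the case $m=0$ is immediate since $\mathcal{O}_X$ is Cartier, giving $a(x,0)=0$ in all cases. Only $m>0$ requires work.

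The tool will be the local form of Riemann--Roch on a resolution. I would fix a resolution $f\colon Y\to X$ of $x$ that also resolves the foliation --- e.g.\ the one on which $\tilde\folF:=f^*\folF$ has only reduced singularities --- with exceptional curves $\{E_i\}$ over $x$. Since $\mathcal{O}_X(mK_\folF)=f_*\mathcal{O}_Y(f_*^{-1}(mK_\folF))$ and $f$ has fibres of dimension $\le 1$, the Leray spectral sequence for $f$ gives
\[
\chi\bigl(X,\mathcal{O}_X(mK_\folF)\bigr)=\chi\bigl(Y,\mathcal{O}_Y(f_*^{-1}(mK_\folF))\bigr)+\operatorname{length}\bigl(R^1 f_*\mathcal{O}_Y(f_*^{-1}(mK_\folF))\bigr)_x .
\]
Comparing this with Riemann--Roch on the smooth surface $Y$ and with Theorem~\ref{thm:rr} on $X$ --- using $(mK_\folF)^2=(m f^*K_\folF)^2$, $K_X\cdot mK_\folF = f^*K_X\cdot m f^*K_\folF$, $K_Y = f^*K_X+\sum_i a(E_i,X)E_i$ for the surface discrepancies $a(E_i,X)$, and $\chi(\mathcal{O}_Y)=\chi(\mathcal{O}_X)-p_g(x)$ with $p_g(x):=\operatorname{length}(R^1f_*\mathcal{O}_Y)_x$ the geometric genus of $x$ --- then expresses $a(x,mK_\folF)$ as a purely local quantity determined by the exceptional part of $m f^*K_\folF$ (equivalently by the foliation discrepancies $a(E_i,X,\folF)$, since $K_{\tilde\folF}=f^*K_\folF+\sum_i a(E_i,X,\folF)E_i$ with $K_{\tilde\folF}$ Cartier on the smooth $Y$), together with $p_g(x)$ and $\operatorname{length}(R^1f_*\mathcal{O}_Y(f_*^{-1}(mK_\folF)))_x$. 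So in each normal form it remains to (i) identify $K_\folF$ as a local Weil divisor, (ii) compute the discrepancies $a(E_i,X,\folF)$, and (iii) evaluate the two $R^1f_*$ lengths.

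Carrying this out in the three cases: if $x$ is terminal of index $n$, then $x$ is a cyclic quotient singularity, hence rational with $p_g(x)=0$, and $\mathcal{O}_X(K_\folF)$ generates the relevant $\mathbb{Z}/n$ inside $\operatorname{Cl}(X,x)$; feeding the Hirzebruch--Jung chain of the quotient and the normal form $\lambda x\partial_x+y\partial_y$ (or its cyclic quotient) into the formula above produces $a(x,K_\folF)=-\tfrac{n-1}{2n}$, which is exactly the orbifold Riemann--Roch correction term for this divisor class. If $x$ is canonical non-terminal and $\mathbb{Q}$-Gorenstein, the normal form shows that $x$ is rational and $2K_\folF$ is Cartier at $x$, so $a(x,2mK_\folF)=0$; if in addition $K_\folF$ is Cartier at $x$ then $a(x,mK_\folF)=0$ for all $m$, and otherwise ($K_\folF$ of index exactly $2$ at $x$) the same computation on the odd multiples gives $a(x,(2m+1)K_\folF)=-\tfrac12$. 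If $x$ is a cusp, it is a minimally elliptic singularity with $p_g(x)=1$ whose minimal resolution is a cycle of rational curves $\sum_i E_i$ along which $\tilde\folF$ is reduced and $K_Y+\sum_i E_i\equiv 0$; using this structure one checks that for every $m>0$ the divisor $f_*^{-1}(mK_\folF)$ and the length of $R^1f_*\mathcal{O}_Y(f_*^{-1}(mK_\folF))_x$ behave so that $a(x,mK_\folF)=-1$.

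The main obstacle is the cusp case. Because $K_\folF$ is not $\mathbb{Q}$-Cartier at a cusp, the exceptional part of $m\,f^*K_\folF$ never becomes integral, so --- unlike in the $\mathbb{Q}$-Gorenstein cases --- the answer cannot be reduced to a single index together with a residue class modulo it, and one must exploit the full structure of the minimal resolution of a cusp: the cycle of $\bP^1$'s, the reducedness of $\tilde\folF$ along it, and the numerical triviality of $K_Y+\sum_i E_i$ on the cycle (which is precisely what forces $p_g(x)=1$ and controls the relevant $R^1f_*$ lengths), in order to see that the correction term stabilises at $-1$ for all $m>0$. Everything else is a finite, if somewhat lengthy, computation against the normal forms of \cite{mcquillan08}.
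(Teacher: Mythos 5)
The paper does not prove this proposition: it is quoted verbatim from \cite[Section 2]{langer21}, with the classification input attributed to \cite[Corollary I.2.2 and Fact I.2.4]{mcquillan08}, so there is no in-paper argument to compare yours against. Your outline does reconstruct the strategy of the cited source --- reduce to the finite list of formal normal forms, then compute the local Riemann--Roch correction on a resolution by comparing $\chi$ on $X$ and on $Y$ via the Leray spectral sequence and the foliation discrepancies --- and the structural remarks (terminal singularities are rational cyclic quotients with $p_g=0$; the canonical non-terminal $\mathbb{Q}$-Gorenstein ones have index dividing $2$, which is why only the parity of $m$ matters; the non-$\mathbb{Q}$-Cartier case is governed by the elliptic/cusp structure with $p_g=1$) are all consistent with how the computation actually goes.

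However, as written this is a proof plan rather than a proof: each of the three numerical conclusions ($-\tfrac{n-1}{2n}$, $-\tfrac12$ on odd multiples, and $-1$) is asserted as the outcome of a computation that is never performed, and those computations are the entire content of the statement. Nothing in your text would detect, say, $-\tfrac{n-1}{n}$ in place of $-\tfrac{n-1}{2n}$. One technical point to fix if you carry this out: the identity $\mathcal{O}_X(D)=f_*\mathcal{O}_Y(f_*^{-1}D)$ is not correct in general; one has $\mathcal{O}_X(D)=f_*\mathcal{O}_Y(\lfloor f^*D\rfloor)$ for the Mumford pullback, and the discrepancy between $\lfloor f^*(mK_\folF)\rfloor$ and the strict transform (together with the $R^1f_*$ terms) is exactly where the correction $a(x,mK_\folF)$ comes from --- this is why Langer's treatment works with full sheaves on the resolution rather than strict transforms. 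With that repaired and the case-by-case computations actually executed against the normal forms, your approach would yield the proposition.
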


\subsection{Canonical models of foliated surfaces}

\begin{definition}[\cite{mcquillan08}, Definition III.3.1]
\label{def:canmod}
A foliated surface $(X,\folF)$ is called a \emph{canonical model} if $X$ is normal, $\folF$ only has canonical singularities, $K_\folF$ is nef, and for all irreducible curves $C$, $K_\folF \cdot C  = 0$ implies $C^2 \geq 0$. 
\end{definition}

Note that in the definition, intersections and nefness are defined as in Section \ref{subsection:inters} because $K_\folF$ might not be $\mathbb{Q}$-Cartier (\cite{mcquillan08}, Theorem IV.2.2). For the same reason, when $(X,\folF)$ is a canonical model of general type, $K_\folF$ might not be ample even though the next lemma shows that it numerically behaves like an ample divisor.

\begin{lemma}
\label{lemma:canample}
    If $(X,\folF)$ is a canonical model of general type, then $K_\folF^2 > 0$ and $K_\folF \cdot C >0$ for every irreducible curve $C$ on $X$. 
\end{lemma}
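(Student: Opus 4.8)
The plan is to prove the two assertions separately, extracting positivity of $K_\folF^2$ first and then deducing strict positivity of $K_\folF \cdot C$ for every irreducible curve.

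\medskip

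\noindent\textbf{Step 1: $K_\folF^2 > 0$.} Since $(X,\folF)$ is a canonical model, $K_\folF$ is nef, so $K_\folF^2 \geq 0$; I need to rule out $K_\folF^2 = 0$. Because $(X,\folF)$ is of general type, $\kappa(\folF) = 2$, so $h^0(mK_\folF)$ grows quadratically in $m$, which already forces $K_\folF^2 > 0$ by the usual asymptotic estimate $h^0(mK_\folF) \leq \tfrac12 K_\folF^2 m^2 + O(m)$ — but this estimate is exactly the kind of thing that needs Theorem \ref{thm:rr} and the boundedness of the correction terms $a(x,mK_\folF)$ supplied by Proposition \ref{prop:lansings}. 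The clean argument: by Theorem \ref{thm:rr}, $\chi(mK_\folF) = \tfrac12(m^2 K_\folF^2 - m K_X\cdot K_\folF) + \chi(\bigO_X) + \sum_x a(x,mK_\folF)$, and by Proposition \ref{prop:lansings} the correction terms are uniformly bounded (each lies in a finite set of values independent of $m$). Hence $\chi(mK_\folF)$ grows at most linearly if $K_\folF^2 = 0$. Since $h^2(mK_\folF) = h^0(K_X - mK_\folF)$ and $K_\folF$ is big (as $\folF$ is of general type — equivalently, pass to a resolution where $K_\folF$ is represented by a big divisor), this $h^2$ term vanishes for $m \gg 0$, so $h^0(mK_\folF) \geq \chi(mK_\folF)$ would be at most linear, contradicting $\kappa(\folF)=2$. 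Therefore $K_\folF^2 > 0$.

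\medskip

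\noindent\textbf{Step 2: $K_\folF \cdot C > 0$ for every irreducible curve $C$.} Suppose not: since $K_\folF$ is nef there is an irreducible curve $C$ with $K_\folF \cdot C = 0$. By the definition of canonical model, this forces $C^2 \geq 0$. Now apply the Hodge index theorem in the form of Lemma \ref{lemma:hodge} to the divisors $D_1 = K_\folF$ and $D_2 = C$: we have $K_\folF^2 > 0$ by Step 1, so the hypothesis $(a_1 D_1 + a_2 D_2)^2 > 0$ is satisfied (take $a_1$ large, $a_2 = 0$). Hodge index then gives $K_\folF^2 \cdot C^2 \leq (K_\folF \cdot C)^2 = 0$, so $C^2 \leq 0$, hence $C^2 = 0$. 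But then equality holds in Lemma \ref{lemma:hodge}, so some nonzero linear combination $\alpha K_\folF + \beta C$ is numerically trivial; since $K_\folF^2 > 0$ we cannot have $\beta = 0$, and then $C \equiv -\tfrac{\alpha}{\beta} K_\folF$, forcing $C^2 = \tfrac{\alpha^2}{\beta^2} K_\folF^2 > 0$ unless $\alpha = 0$, i.e. $C \equiv 0$ — impossible for an effective nonzero curve. This contradiction shows no such $C$ exists, so $K_\folF \cdot C > 0$ for all irreducible $C$.

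\medskip

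\noindent\textbf{Main obstacle.} The delicate point is Step 1: establishing $K_\folF^2 > 0$ rigorously when $K_\folF$ need not be $\mathbb{Q}$-Cartier. One must make sure that "general type" ($\kappa(\folF) = 2$) genuinely yields the quadratic lower bound on $h^0(mK_\folF)$ in this singular setting, which is where Theorem \ref{thm:rr} together with the explicit, $m$-uniform bounds on $a(x,mK_\folF)$ from Proposition \ref{prop:lansings} do the real work; one also needs the bigness of $K_\folF$ (on a suitable resolution) to kill $h^2$. Step 2 is then a clean application of the normal-surface Hodge index theorem once Step 1 is in hand.
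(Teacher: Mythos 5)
Your Step 2 follows the paper's route almost exactly up to the point where you conclude that $C$ must be numerically trivial: nefness plus the canonical-model condition give $C^2=0$, and the equality case of Lemma \ref{lemma:hodge} gives $C\equiv 0$. The divergence is in how that is ruled out. You dismiss it with ``impossible for an effective nonzero curve,'' but this is exactly where the paper has to work: $X$ is a priori only a complete normal \emph{algebraic space}, so there need not be an ample divisor on $X$ to intersect $C$ with, and ``effective nonzero $\Rightarrow$ not numerically trivial'' is not automatic. The paper resolves this by passing to the minimal resolution of the non-$\mathbb{Q}$-Gorenstein points and using McQuillan's decomposition $f^*K_\folF = H+Z$ with $H$ ample and $Z$ effective exceptional, so that $K_\folF\cdot C = (H+Z)\cdot f_*^{-1}C>0$. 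Your claim can be repaired more elementarily (pull $C$ back to a resolution $Y\to X$; $Y$ is a smooth complete surface, hence projective, $f^*C$ is a nonzero effective divisor that would be numerically trivial on all of $Y$, contradicting $f^*C\cdot A>0$ for $A$ ample on $Y$), but as written this step is a gap, not a one-liner.

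Step 1 also contains a genuine logical error. From $\chi(mK_\folF)$ growing at most linearly and $h^2(mK_\folF)=0$ you deduce $h^0(mK_\folF)\geq\chi(mK_\folF)$ and then claim $h^0$ ``would be at most linear'' --- but a lower bound on $h^0$ by a linear quantity says nothing about whether $h^0$ can still grow quadratically; you would need to control $h^1(mK_\folF)$ from above, which your argument does not do (and the vanishing of $h^0(K_X-mK_\folF)$ for $m\gg 0$ that you invoke is itself circular without already knowing $K_\folF^2>0$ or having an ample class on $X$). The fact you actually need --- for a nef class, bigness is equivalent to positive top self-intersection, i.e.\ $h^0(mD)\leq \tfrac12 D^2m^2+O(m)$ for nef $D$ --- is standard, and the paper simply invokes it in one line (``since $K_\folF$ is big and nef, $K_\folF^2>0$''); your first sentence of Step 1 points at the right estimate, but the ``clean argument'' you substitute for it does not prove it.
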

\begin{proof}
    
    We prove the statement by contradiction. Suppose there exist a curve $C$ such that $K_\folF \cdot C =0$, then by the Hodge index theorem (Lemma \ref{lemma:hodge}),
    $$K_\folF^2 C^2 \leq (K_\folF \cdot C)^2 = 0.$$
    Since $K_\folF$ is big and nef, then $K_\folF^2 > 0$ and $C^2 \leq 0$, which means that $C^2=0$ by the definition of canonical model. Again by the Hodge index theorem, the class of $C$ must be proportional to the class of $K_\folF$, so the only possibility is that $C$ is numerically trivial. 
    
    Let $f \colon (X_m,\folF_m) \rightarrow (X,\folF)$ be the minimal resolution of the non-$\mathbb{Q}$-Gorenstein singularities of $(X,\folF)$. By \cite[Theorem IV.2.1.]{mcquillan08} write $f^*K_\folF=K_{\folF_m}=H+Z$, where $H$ is an ample $\mathbb{Q}$-divisor and Z is an effective $\mathbb{Q}$-divisor supported on the exceptional locus of $f$. Then,
    $$K_\folF \cdot C = K_{\folF_m} \cdot f^{-1}_* C = 0 $$ by the projection formula, but $$K_{\folF_m} \cdot f^{-1}_* C = (H+Z) \cdot f^{-1}_* C > 0,$$ because $H$ is ample and $Z$ is effective. This gives the desired contradiction.
\end{proof}

\subsection{Minimal partial du Val resolutions}
Since a canonical model $(X,\folF)$ is not necessarily $\mathbb{Q}$-Gorenstein, it is possible to pass to the projective setting by considering partial resolutions such that the pullback foliation is $\mathbb{Q}$-Gorenstein. This is one of the reasons to consider the \emph{minimal partial du Val resolution} of a canonical model \cite[Definition 3.1]{chen21}.

\begin{definition}
\label{def:mpdvr}
    Let $(X_c,\folF_c)$ be a canonical model of general type, and let $(X^m,\folF^m)$ be the minimal resolution of the canonical non-terminal singularities of $(X_c,\folF_c)$ together with its pullback foliation; let $g \colon (X^m,\folF^m) \rightarrow (X_c,\folF_c)$ be the associated morphism. By running a classical MMP, let $h \colon X^m \rightarrow X$ be the relative canonical model over $X_c$, which is obtained by contracting smooth rational curves $C$ with $C^2 = -2$ in the fibers of $g$ (in particular, $K_X$ is ample over $X_c$), and let $\folF$ be the pushforward foliation on $X$. $(X,\folF)$ is called the \emph{minimal partial du Val resolution} (MPDVR) of $(X_c,\folF_c)$.
\end{definition}

\section{Finiteness of Hilbert functions of canonical models}
\begin{theorem}
\label{theorem:main}
Let $k_1,k_2$ be rational numbers, $s$ a positive integer. Let $\mathcal{H}_{k_1,k_2,s}$ be the set of Hilbert functions $P(m) = \chi(X, mK_\folF)$ of canonical models $(X,\folF)$ of general type such that $K_\folF^2=k_1$, $K_\folF \cdot K_X=k_2$ and $i_\mathbb{Q}(\folF)=s$. Then $\mathcal{H}_{k_1,k_2,s}$ is finite.
\end{theorem}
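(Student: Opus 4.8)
The plan is to use the generalised Riemann–Roch formula (Theorem \ref{thm:rr}) to write $P(m) = \chi(X, mK_\folF)$ in terms of the fixed numerical data plus a bounded correction, and then to argue that only finitely many correction terms can occur. Write
$$P(m) = \frac{1}{2}\left(m^2 K_\folF^2 - m\, K_X \cdot K_\folF\right) + \chi(X,\mathcal{O}_X) + \sum_{x \in \mathrm{Sing}\,X} a(x, mK_\folF).$$
By hypothesis the quadratic part is $\frac{1}{2}(m^2 k_1 - m k_2)$, which is fixed. So the task reduces to showing that only finitely many possibilities arise for the pair consisting of $\chi(X,\mathcal{O}_X)$ and the function $m \mapsto \sum_x a(x,mK_\folF)$.

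First I would analyse the singular contributions using Proposition \ref{prop:lansings}. The points $x$ fall into three classes: terminal points (contributing $-\tfrac{n-1}{2n}$ where $n$ is the index, independent of $m$ for $m>0$), canonical non-terminal $\mathbb{Q}$-Gorenstein points (each contributing either $0$ for all $m$, or $-\tfrac12$ for odd $m$ and $0$ for even $m$), and canonical non-$\mathbb{Q}$-Gorenstein points (each contributing $-1$ for $m>0$ and $0$ for $m=0$). Crucially, a terminal singularity of index $n$ forces $n \mid i_\mathbb{Q}(\folF) = s$, so $n \in \{1,\dots,s\}$; in particular each terminal point contributes one of finitely many values, and its index is bounded by $s$. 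Thus $\sum_x a(x,mK_\folF)$, as a function of $m$, is determined by the finite data: (a) for each divisor $d \mid s$, the number of terminal points of index $d$; (b) the number of canonical non-terminal $\mathbb{Q}$-Gorenstein points of the "$-\tfrac12$ on odd $m$" type; (c) the number $r$ of non-$\mathbb{Q}$-Gorenstein points. So it suffices to bound all of these counts, together with $\chi(X,\mathcal{O}_X)$, in terms of $k_1, k_2, s$.

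The key input for the bounds is that $K_\folF$ is big and nef with $K_\folF^2 = k_1$ fixed and $K_\folF \cdot K_X = k_2$ fixed. I would pass to the minimal partial du Val resolution (Definition \ref{def:mpdvr}) or, as in Lemma \ref{lemma:canample}, to the minimal resolution $f\colon (X_m,\folF_m)\to(X,\folF)$ of the non-$\mathbb{Q}$-Gorenstein singularities, where $f^*K_\folF = K_{\folF_m} = H + Z$ with $H$ ample $\mathbb{Q}$-divisor and $Z \geq 0$ exceptional. Since the cusp singularities giving non-$\mathbb{Q}$-Gorenstein points each contribute a fixed "chunk" to $Z$ and hence a positive amount to $K_\folF^2 = H^2 + H\cdot Z + Z \cdot (H+Z)$ — more precisely, the discrepancy data of McQuillan's cusps is rigid — the number $r$ of such points is bounded above in terms of $k_1$ (and possibly $k_2$). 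A similar pigeonhole, comparing $K_\folF^2$ and $K_\folF\cdot K_X$ against contributions localised at the singular points after resolution, bounds the number of terminal and canonical non-terminal points: each such point of index $>1$ carries a nontrivial exceptional configuration contracted in the relevant MMP, contributing a definite positive quantity to one of the two fixed intersection numbers. Finally, with $K_\folF$ numerically ample and $K_\folF^2, K_\folF\cdot K_X$ fixed, standard surface theory (Noether's formula, boundedness of the underlying surface type) bounds $\chi(X,\mathcal{O}_X)$; alternatively one invokes that canonical models with fixed $K_\folF^2$ are already known to have bounded $\chi(\mathcal{O}_X)$, or deduces it from Theorem \ref{thm:langer}–style effective birationality once the $\mathbb{Q}$-index is fixed.

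The main obstacle I anticipate is step three: extracting, from the two fixed numbers $K_\folF^2$ and $K_\folF \cdot K_X$ alone, a genuine upper bound on the number of each type of singular point — especially separating the terminal points of index $d \mid s$ (which affect $K_\folF^2$ only through the minimal resolution's discrepancy contributions) from the canonical non-terminal and non-$\mathbb{Q}$-Gorenstein points. This requires a careful local computation of how much each singularity type contributes to $f^*K_\folF$ and to $f^* K_X$ on the minimal resolution, using McQuillan's classification of terminal and canonical foliation singularities, and then a positivity/pigeonhole argument showing these contributions cannot all be absorbed while keeping $H^2, H\cdot K_{X_m}$ within the fixed budget. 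Once these counts are bounded, finiteness of $\mathcal{H}_{k_1,k_2,s}$ follows because $P(m)$ is then one of finitely many explicit functions of $m$.
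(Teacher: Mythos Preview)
Your Riemann--Roch reduction is exactly the right starting point, and the classification of the local contributions $a(x,mK_\folF)$ via Proposition~\ref{prop:lansings} is correct. The gap is in the step you yourself flag as the main obstacle: the pigeonhole argument for bounding the number of singular points does not work as stated, and the bound on $\chi(\mathcal{O}_X)$ is not established.

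The problem with the pigeonhole is that the intersection numbers $K_\folF^2$ and $K_\folF\cdot K_X$ are defined via Mumford's pullback, and by construction of the MPDVR (where all foliation discrepancies over the contracted loci are zero) one has $K_\folF^2 = K_{\folF_c}^2$ and $K_\folF\cdot K_X = K_{\folF_c}\cdot K_{X_c}$ on the nose. So no singular point ``contributes a definite positive quantity'' to either fixed number; there is nothing to pigeonhole against. The same issue undermines the proposed bound on the number of cusps: the structure of a cusp is not rigid (the cycle of rational curves can have arbitrary length), and its presence does not force a lower bound on $K_\folF^2$ per cusp. Likewise, ``standard surface theory (Noether's formula)'' cannot bound $\chi(\mathcal{O}_X)$ without first bounding $K_X^2$ and $c_2$, neither of which is given.

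The paper's argument proceeds quite differently. On the MPDVR one constructs the auxiliary divisor $D = 4sK_\folF + K_X$ and checks directly, via the cone theorem on the $K_X$-negative rays and the relative ampleness of $K_X$ over $X_c$, that $D$ is ample; moreover $sD$ is Cartier since $i(K_X)\mid i(\folF)=s$. One then bounds $K_X^2$ from above by the Hodge index theorem (Lemma~\ref{lemma:hodge}) and from below by $D^2>0$, so $D^2$ and $D\cdot K_X$ take only finitely many values. The Koll\'ar--Matsusaka inequality (Theorem~\ref{theorem:km}) applied to $sD$ then pins down $\chi(\mathcal{O}_X)$ up to finitely many choices. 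Finally, the polarised surfaces $(X,sK_\folF)$ now have finitely many Hilbert polynomials, so by Koll\'ar's boundedness criterion they sit in a bounded family, and the number of singular points is bounded by the degree of the singular locus of that family over the base. This last step is what replaces your pigeonhole: boundedness of the underlying surfaces, rather than a local positivity contribution, is what caps the singularity count.
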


\begin{proof}
Let $(X_c, \folF_c) \in \mathcal{H}_{k_1,k_2,s}$ be a foliated canonical model, and let $f \colon (X,\folF) \rightarrow (X_c,\folF_c)$ be its minimal partial du Val resolution. By \cite[Theorem 5]{langer21}, we know that $R^1f_*\bigO_X(mK_\folF)=0$, hence $H^i(mK_\folF)=H^i(mK_{\folF_C})$ for all $m \geq 0$, and in particular $\chi(mK_\folF)=\chi(mK_{\folF_c})$. As a consequence, by Theorem \ref{thm:rr} we also have $K_\folF^2=K_{\folF_c}^2$ and $K_\folF \cdot K_X = K_{\folF_c} \cdot K_{X_c}$; furthermore, $i(\folF)=i_\mathbb{Q}(\folF)=i_\mathbb{Q}(\folF_c)$ by the construction of  the minimal partial du Val resolution. Therefore, in order to prove the statement it is enough to show that it holds for minimal partial du Val resolutions instead.

Let $E= \sum E_i$ be the exceptional divisor of $f$, and let $D = 4i(\folF)K_\folF+K_X$. We note that $D$ is ample: to see this, by the Nakai-Moishezon criterion it is enough to check that $D$ is big, and the intersection of $D$ with every irreducible curve is strictly positive. Since $K_\folF$ is big and $D=K_\folF + (3i(\folF)K_\folF+K_X)$, $D$ is big as well if $3i(\folF)K_\folF+K_X$ is nef.

Let $C$ an irreducible curve on $X$:
\begin{itemize}
\item{$C=E_i$: in this case, 

$$D \cdot C = (4i(\folF)K_\folF+K_X) \cdot C = K_X \cdot C > 0,$$
as by construction $K_X$ is ample over $X_c$.}

\item{$K_X \cdot C \geq 0$: then,

$$D \cdot C \geq 4i(\folF)K_\folF \cdot C = 4i(\folF)K_{\folF_c} \cdot f_* C >0,$$
because $K_\folF = f^* K_{\folF_c}$ and $K_{\folF_c}$ is numerically ample.}

\item{$K_X \cdot C < 0$: by \cite[Theorem 3.8]{fuj12}, every $K_X$-negative extremal ray is spanned by a rational curve with $-3 \leq K_X \cdot C < 0$, so 
$$D \cdot C \geq 4i(\folF)K_\folF \cdot C -3 > 0.$$}

\end{itemize}

Thus, $D \cdot C >0$ for any irreducible curve $C$; furthermore, the same argument shows that $3i(\folF)K_\folF + K_X$ is nef, so $D$ is ample. Since $i(\folF)=s$ is fixed and $i(K_X)\mid i(\folF)$ (because by construction, foliated non-terminal singularities of $(X,\folF)$ are du Val singularities), $sD$ is an ample Cartier divisor and for $m \gg 0$, $\chi(X,msD) = h^0(X, msD)$. So we can apply Theorem \ref{theorem:km} to say that for $m \gg 0$,

$$|P(msD)-\frac{m^2s^2 D^2}{2}| \leq Q(m),$$
where $Q(m)$ is a linear polynomial only depending on $s^2D^2$ and $s(D \cdot K_X)$. To be more precise, the previous inequality holds for any $m>0$: in fact, by construction $sD$ is big and nef, hence the Kawamata-Viehweg theorem gives $H^i(X,msD)=0$ for all $i>0$, that is $h^0(X,msD)=P(ms)$.

Next, we show that $D^2$ and $D \cdot K_X$ can only assume a finite number of values. In particular, since $K_\folF^2$ and $K_\folF \cdot K_X$ are fixed, we need to prove that $K_X^2$ has only a finite number of possible values. 

From Lemma \ref{lemma:hodge},
$$K_\folF ^2 K_X^2 \leq (K_\folF \cdot K_X)^2,$$
and since $K_\folF^2$ and $K_\folF \cdot K_X$ are fixed, $K_X^2$ is bounded from above. On the other hand, since $D$ is ample, $D^2>0$. But
$$D^2 = 16i(\folF)K_\folF^2 + 8i(\folF)K_\folF \cdot K_X + K_X^2>0,$$
which implies that $K_X^2$ is bounded from below; since $i(K_X)$ is bounded, this shows that there are only finitely many values for $K_X^2$. Then, we can suppose $K_X^2$ is fixed, so that both $D^2$ and $D \cdot K_X$ are fixed. Since $Q(m)$ only depends on $D^2$ and $D \cdot K_X$, $P(ms)-\frac{m^2s^2K_\folF^2}{2}$ is bounded by the same polynomial $Q(m)$ for infinite values of $m$ and for all partial resolutions $(X,\folF)$, which means that the set of possible polynomials equal to $P(ms)$ is finite. These only differ for the constant term $\chi(\bigO_X)$, hence the set of values of $\chi(\bigO_X)$ is finite as well. From Theorem \ref{thm:rr}, if we fix $\chi(\mathcal{O}_X)$ too, $P(m)$ is determined up to the term $\sum a(x,mK_\folF)$; each $a(x,mK_\folF)$ can only assume a finite number of values by Proposition \ref{prop:lansings} as $i(\folF)$ is bounded, so we only need to show that the number of singularities is bounded. 

Since $\chi(X,msK_\folF)$ is a polynomial in $m$ belonging to a finite set, \cite[Theorem 2.1.2]{kollar85}, implies that the family of polarised surfaces $(X, sK_\folF)$ is bounded; in particular, the surfaces $X$ belong to a bounded family $f \colon \mathcal{X} \rightarrow \mathcal{T}$. Since normality is an open condition, we can restrict the family and suppose that every fiber of $f$ is normal. By generic smoothness, $f$ is smooth outside a closed set $K \subset \mathcal{X}$, where $K=\bigcup K_i$ and each $K_i$ is irreducible. Consider the restriction 
$\restr{f}{Ki} \colon K_i \rightarrow \mathcal{T}$; since the fibers of $f$ are normal, every fiber of $\restr{f}{K_i}$ is a finite set and $\restr{f}{K_i}$ is quasi-finite. Furthermore, since $f$ is proper $\restr{f}{K_i}$ is proper as well; then, $f$ is finite and the cardinality of each fiber is bounded by the degree of $\restr{f}{K_i}$. This implies that the number of singularities on the fibers is bounded from above by $\sum \deg (\restr{f}{K_i})$.
\end{proof}

Thanks to Theorem \ref{theorem:main}, all the boundedness results of \cite{langer21} and \cite{chen21} that require the Hilbert function to be fixed still hold under the weaker hypotheses of Theorem \ref{theorem:main}. In particular, from Theorems \ref{thm:langer} and \ref{thm:chen} we deduce the following.

\begin{corollary}
\label{cor:hl}
Fix rational numbers $k_1,k_2$ and a positive integer $s$. There exists a constant $N_1$, only depending on $k_1,k_2,s$, such that for any canonical model $(X,\folF)$ with $K_\folF^2 = k_1$, $K_\folF \cdot K_X = k_2$ and $i_\mathbb{Q}(\folF)$, and for any $m\geq N_1$, $|mK_\folF|$ defines a birational map.
\end{corollary}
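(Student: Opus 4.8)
The plan is to combine Theorem~\ref{theorem:main} with Theorem~\ref{thm:langer} in the most direct way possible. Theorem~\ref{theorem:main} tells us that the set $\mathcal{H}_{k_1,k_2,s}$ of Hilbert functions $P(m)=\chi(X,mK_\folF)$ arising from canonical models $(X,\folF)$ of general type with $K_\folF^2=k_1$, $K_\folF\cdot K_X=k_2$ and $i_\mathbb{Q}(\folF)=s$ is finite. On the other hand, Theorem~\ref{thm:langer} attaches to each integer-valued function $P$ a constant $N_P$ such that for any canonical model with $\chi(mK_\folF)=P(m)$ and any $M\geq N_P$, the linear system $|MK_\folF|$ defines a birational map.

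First I would set $N_1 \coloneqq \max_{P\in\mathcal{H}_{k_1,k_2,s}} N_P$; this maximum exists and is finite precisely because $\mathcal{H}_{k_1,k_2,s}$ is a finite set, and it depends only on $k_1,k_2,s$ since $\mathcal{H}_{k_1,k_2,s}$ does. Now let $(X,\folF)$ be any canonical model of general type with $K_\folF^2=k_1$, $K_\folF\cdot K_X=k_2$ and $i_\mathbb{Q}(\folF)=s$, and let $P$ denote its Hilbert function. By definition $P\in\mathcal{H}_{k_1,k_2,s}$, so $N_P\leq N_1$. For any $m\geq N_1$ we then have $m\geq N_P$, and hence $|mK_\folF|$ defines a birational map by Theorem~\ref{thm:langer}. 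This proves the corollary.

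There is essentially no obstacle here: the entire content is packaged in Theorem~\ref{theorem:main}, and the corollary is a formal consequence of the finiteness of the relevant family of Hilbert functions together with the already-established effective birationality of Hacon--Langer. The only point worth checking is that Theorem~\ref{thm:langer} is stated directly for canonical models $(X,\folF)$ rather than for their partial resolutions, so that no further reduction is needed; the same remark applies to the boundedness statement obtained from Theorem~\ref{thm:chen}, which follows by the identical argument, replacing "fixed Hilbert function" with "Hilbert function lying in the finite set $\mathcal{H}_{k_1,k_2,s}$".
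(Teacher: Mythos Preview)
Your proposal is correct and is precisely the argument the paper intends: the corollary is stated immediately after Theorem~\ref{theorem:main} with only the remark that the results of \cite{langer21} and \cite{chen21} requiring a fixed Hilbert function now hold under the weaker numerical hypotheses, which is exactly your ``take $N_1=\max_{P\in\mathcal{H}_{k_1,k_2,s}}N_P$'' step.
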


\begin{corollary}
\label{cor:chen}
    Fix rational numbers $k_1,k_2$ and a positive integer $s$. The set $\mathcal{S}_{k_1,k_2,s}$ of minimal partial du Val resolutions of canonical models of general type $(X,\folF)$  with fixed $K_\folF^2=k_1, K_\folF \cdot K_X = k_2, i_\mathbb{Q}(\folF)=s$ is bounded.
\end{corollary}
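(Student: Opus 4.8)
The plan is to obtain this as an immediate consequence of Theorem~\ref{theorem:main} (finiteness of the set of Hilbert functions) together with Chen's boundedness result Theorem~\ref{thm:chen} (boundedness of the minimal partial du Val resolutions with a fixed Hilbert function), combined with the elementary fact that a finite union of bounded families of projective foliated surfaces is again bounded. In spirit this is the same reduction that yields Corollary~\ref{cor:hl} from Theorems~\ref{theorem:main} and~\ref{thm:langer}.

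Concretely, I would first apply Theorem~\ref{theorem:main} to write $\mathcal{H}_{k_1,k_2,s}=\{P_1,\dots,P_N\}$, a finite set. Then I would take an arbitrary $(X,\folF)\in\mathcal{S}_{k_1,k_2,s}$, say the minimal partial du Val resolution $f\colon(X,\folF)\to(X_c,\folF_c)$ of a canonical model $(X_c,\folF_c)$ of general type with $K_{\folF_c}^2=k_1$, $K_{\folF_c}\cdot K_{X_c}=k_2$ and $i_{\mathbb{Q}}(\folF_c)=s$. By definition of $\mathcal{H}_{k_1,k_2,s}$ the Hilbert function $\chi(X_c,mK_{\folF_c})$ coincides with some $P_j$; moreover, as recorded in the proof of Theorem~\ref{theorem:main} (using $R^1f_*\bigO_X(mK_\folF)=0$), it also equals $\chi(X,mK_\folF)$, so the resolution and the model have the same Hilbert function. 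Hence, by Definition~\ref{def:mpdvr} and the statement of Theorem~\ref{thm:chen}, $(X,\folF)\in S_{P_j}$. This shows $\mathcal{S}_{k_1,k_2,s}\subseteq\bigcup_{j=1}^{N}S_{P_j}$.

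To conclude, I would invoke Theorem~\ref{thm:chen} to get that each $S_{P_j}$ is bounded, and then use that a finite union of bounded families is bounded: if $f_j\colon\mathcal{X}_j\to\mathcal{T}_j$ with a rank-one foliation $\folF_j\subset T_{\mathcal{X}_j/\mathcal{T}_j}$ is a bounding family for $S_{P_j}$, then $\coprod_j f_j\colon\coprod_j\mathcal{X}_j\to\coprod_j\mathcal{T}_j$, equipped with the induced foliation, is a bounding family for $\bigcup_{j=1}^{N}S_{P_j}$, hence also for $\mathcal{S}_{k_1,k_2,s}$.

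I do not expect any serious obstacle here: all the weight is borne by Theorem~\ref{theorem:main}, which is precisely the device that trades the numerical hypotheses $K_\folF^2=k_1$, $K_\folF\cdot K_X=k_2$, $i_{\mathbb{Q}}(\folF)=s$ for the finiteness of the set of possible Hilbert functions. The only points needing a moment's care are checking that $S_{P_j}$ in \cite{chen21} is defined as the set of minimal partial du Val resolutions of \emph{all} general-type canonical models with Hilbert function $P_j$ (so that the inclusion above requires no additional numerical hypotheses on the members of $S_{P_j}$), and that the disjoint-union construction satisfies the conditions in the definition of a bounded family; both are routine.
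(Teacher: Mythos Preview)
Your proposal is correct and follows exactly the approach the paper intends: the corollary is stated as an immediate consequence of Theorem~\ref{theorem:main} combined with Theorem~\ref{thm:chen}, and the paper gives no further argument beyond this reduction. Your additional remarks (the inclusion $\mathcal{S}_{k_1,k_2,s}\subseteq\bigcup_j S_{P_j}$ and the fact that a finite union of bounded families is bounded) simply spell out what the paper leaves implicit.
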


\section{Examples and particular cases}
In this section we collect further results and examples with two goals in mind: to further understand the properties of bounded families of canonical models, and to present particular cases in which fixing $i_\mathbb{Q}(\folF)$ is redundant. From here on, we will only consider canonical models which do not have cusp singularities (hence $K_\folF$ is $\mathbb{Q}$-Cartier).

\subsection{An unbounded family with fixed $K_\folF^2$}
We begin with an example of an unbounded family of canonical models with fixed volume $K_{\mathcal{F}}^2$, but unbounded $K_\folF \cdot K_X$ \cite[Example 2]{xiao87}.

\begin{example}
\label{ex:kf2}
Let $C$ be a smooth curve, $k$ an even integer, $g\geq 2$ an integer. Let $D$ a divisor on $C$ of degree $k$ such that $|(2g+1)D|$ is basepoint free, consider the ruled surface $P = \mathbb{P}(\bigO_C \oplus \bigO_C(D))$ over $C$, and let $\pi$ be the projection on $C$. Let $C_0,C_1$ be disjoint global sections of $\pi$ such that $C_0^2 = -k$, $C_1^2 = k$, $C_0 \cdot C_1 = 0$. 

Let $\Lambda = |(2g+1)C_1|$: the system is basepoint free, hence by Bertini's theorem $\Lambda$ contains a divisor $B$ which is irreducible, reduced and also smooth. As a consequence, since $B$ and $C_0$ are disjoint, the divisor $R = B + C_0$ is smooth and reduced, and in $\mathrm{Pic}(P)$ $R = (2g+1)kF + (2g+2)C_0$ is divisible by 2, where $F$ is a general fiber of $P$; we can then consider the double cover $\sigma: S \rightarrow P$, ramified along $R$. By composition, we get a new fibration $f: S \rightarrow C$ which induces a foliation $\folF$ on $S$ such that $K_\folF = K_{S/C}$. A computation shows that the genus of the fibers is $g$, and $K_\mathcal{F}^2$ is given by

\begin{equation*}
\begin{split}
K_\mathcal{F}^2 = 2kg(g-1),
\end{split}
\end{equation*}

which is independent of the genus of the base curve $C$; furthermore, $(S, K_\folF)$ is a canonical model. Then, if we take $C$ to be a curve of arbitrarily large genus admitting a divisor $D$ as in the construction (for example, any hyperelliptic curve), since the curves $C$ are not bounded the surfaces $S$ are unbounded as well.

\end{example}

Note that in this example, by \cite[Lemma 3.5]{langer21}, $m_C K_\folF - K_X$ is pseudoeffective for an integer $m_C>0$ that increases with the genus of $C$. This can be seen as the reason why boundedness fails, as shown by the following result.

\begin{proposition}
    Let $\mathcal{M}_{k,s}$ the collection of $\mathbb{Q}$-Gorenstein foliated canonical models $(X,\folF)$ such that $K_\folF^2 = k$, $i(\folF)=s$. Then, the following are equivalent.
    \begin{itemize}
        \item{The set \{$K_\folF \cdot K_X \mid (X,\folF)\in \mathcal{M}_{k,s}\}$ is finite.}
        \item{There exists an integer $m>0$ such that $mK_\folF-K_X$ is pseudoeffective for any $(X,\folF)\in \mathcal{M}_{k,s}.$}
    \end{itemize}
\end{proposition}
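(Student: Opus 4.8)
The plan is to prove the two implications separately, using the Hodge index theorem (Lemma~\ref{lemma:hodge}) to relate $K_\folF \cdot K_X$ to the other numerical data. For the direction "pseudoeffectivity $\Rightarrow$ finiteness," suppose $mK_\folF - K_X$ is pseudoeffective for a fixed $m>0$ and all $(X,\folF) \in \mathcal{M}_{k,s}$. Since $K_\folF$ is big and nef (Lemma~\ref{lemma:canample} gives $K_\folF^2 > 0$), intersecting the pseudoeffective divisor $mK_\folF - K_X$ with the nef divisor $K_\folF$ yields $mK_\folF^2 - K_\folF \cdot K_X \geq 0$, i.e.\ $K_\folF \cdot K_X \leq mk$. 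For the lower bound, I would intersect with $K_\folF$ again after noting $K_X$ is nef over the canonical model or, more robustly, use that $K_\folF \cdot K_X = K_{\folF_c} \cdot K_{X_c}$ behaves well under the MPDVR; alternatively, apply Hodge index directly: $K_\folF^2 K_X^2 \leq (K_\folF \cdot K_X)^2$, and combine with the ampleness-type inequality $(mK_\folF - K_X)^2 \geq 0$ when this divisor is big (which follows if $mK_\folF - K_X$ is pseudoeffective and $m$ is taken large enough, or by a perturbation argument using bigness of $K_\folF$), giving $m^2 k - 2m(K_\folF \cdot K_X) + K_X^2 \geq 0$. Together with $K_X^2 \geq K_\folF^2 K_X^2 / (K_\folF \cdot K_X) \cdot (\dots)$—more carefully, bounding $K_X^2$ from below via the fact that $i(\folF)=s$ is fixed and $K_X^2$ takes values in $\tfrac{1}{s^2}\mathbb{Z}$ with an upper bound from Hodge index—one deduces $K_\folF \cdot K_X$ lies in a bounded subset of $\tfrac{1}{s}\mathbb{Z}$, hence is finite.

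For the converse, "finiteness $\Rightarrow$ pseudoeffectivity," I would invoke \cite[Lemma 3.5]{langer21}, cited in the remark just above the statement: for each $(X,\folF)$ there is an integer $m_X > 0$ such that $m_X K_\folF - K_X$ is pseudoeffective, and the point is to show $m_X$ can be chosen uniformly. I expect the proof of that lemma to produce $m_X$ as an explicit function of the numerical invariants—plausibly of $K_\folF^2$, $K_\folF \cdot K_X$, and $K_X^2$—so once $K_\folF \cdot K_X$ ranges over a finite set (and $K_\folF^2 = k$, $i(\folF) = s$ are fixed, which by the Hodge index bound $K_\folF^2 K_X^2 \leq (K_\folF \cdot K_X)^2$ and the integrality $s^2 K_X^2 \in \mathbb{Z}$ forces $K_X^2$ into a finite set as well), the quantity $m_X$ is bounded above by some $m$ depending only on $k, s$ and the finite set of values of $K_\folF \cdot K_X$. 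Taking $m$ to be this bound (and using that $nK_\folF - K_X$ pseudoeffective implies $n'K_\folF - K_X$ pseudoeffective for $n' \geq n$, since $K_\folF$ is nef hence pseudoeffective) gives the uniform statement.

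The main obstacle is the converse direction: it hinges entirely on the internal structure of the bound in \cite[Lemma 3.5]{langer21}, namely that the threshold $m_X$ depends only on the listed numerical invariants and not on finer geometric data of $(X,\folF)$. If that lemma's bound is not manifestly of this form, I would instead argue via boundedness: finiteness of $K_\folF \cdot K_X$ together with Theorem~\ref{theorem:main} and Corollary~\ref{cor:chen} shows the MPDVRs form a bounded family $\mathcal{X} \to \mathcal{T}$, and pseudoeffectivity of $m K_\folF - K_X$ is a constructible (indeed, for the relevant $m$, closed) condition on $\mathcal{T}$; since it holds on a dense set of points (every point, by \cite[Lemma 3.5]{langer21} applied fiberwise) and $\mathcal{T}$ has finitely many components, a single $m$ works on all of $\mathcal{T}$, hence for all models in $\mathcal{M}_{k,s}$. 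The routine checks I am omitting are the precise Hodge-index manipulations bounding $K_X^2$ and the verification that $m K_\folF - K_X$ big (not merely pseudoeffective) can be arranged by enlarging $m$, which follows since $K_\folF$ is big.
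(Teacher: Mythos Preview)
Your ``finiteness $\Rightarrow$ pseudoeffectivity'' direction is essentially the paper's argument: invoke \cite[Lemma 3.5]{langer21} and observe that the threshold depends only on $K_\folF^2$, $K_\folF\cdot K_X$ and $i(\folF)$, so a finite set of values for $K_\folF\cdot K_X$ yields a uniform $m$. (Your side discussion about bounding $K_X^2$ and the boundedness-of-families alternative is unnecessary here; the lemma already gives the uniform constant.)

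The genuine gap is in the other direction, specifically the \emph{lower} bound on $K_\folF\cdot K_X$. Your upper bound $K_\folF\cdot K_X\le mk$ from intersecting the pseudoeffective divisor $mK_\folF-K_X$ with the nef $K_\folF$ is correct and matches the paper. But none of your proposed routes to a lower bound actually produces one. Relative nefness of $K_X$ over the canonical model says nothing about the global number $K_\folF\cdot K_X$. The Hodge-index route collapses: Hodge index gives $K_X^2\le (K_\folF\cdot K_X)^2/k$, and even granting $(mK_\folF-K_X)^2\ge 0$ (which you have not established---pseudoeffective does not imply non-negative self-intersection), combining the two inequalities yields
\[
\tfrac{1}{k}\bigl(K_\folF\cdot K_X - mk\bigr)^2 \ge 0,
\]
which is vacuous. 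And there is no lower bound on $K_X^2$ coming from integrality alone.

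The paper's fix is a single extra input you are missing: by the proof of \cite[Theorem 3.6]{langer21}, the divisor $3\,i(\folF)K_\folF+K_X$ is nef. Intersecting with $K_\folF$ immediately gives $K_\folF\cdot K_X\ge -3sk$, and since $i(\folF)=s$ is fixed and $K_\folF\cdot K_X$ lies in a discrete set, finiteness follows. This nefness is exactly the counterpart to your pseudoeffectivity hypothesis on the other side, and without it (or an equivalent positivity statement for some combination $aK_\folF+K_X$) the lower bound cannot be extracted from Hodge-index manipulations alone.
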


\begin{proof}
    If $K_\folF \cdot K_X$ only admits a finite number of values, we can suppose that it is fixed. By the proof of \cite[Theorem 3.6]{langer21}, $3i(\folF)K_\folF + K_X$ is nef, and by \cite[Lemma 3.5]{langer21} there is a constant $m$, only depending on $K_\folF^2,$ $ K_\folF \cdot K_X$ and $i(\folF)$, such that $mK_\folF - K_X$ is pseudoeffective.

    Conversely, suppose that there exists an integer $m>0$ such that $mK_\folF - K_X$ is pseudoeffective for all $(X,\folF) \in \mathcal{M}_{k,s}$. Since $K_\folF$ is ample, $(mK_\folF - K_X) \cdot K_\folF \geq 0$, which implies that $K_\folF \cdot K_X \leq mK_\folF^2$; on the other hand, as $3i(\folF)K_\folF + K_X$ is nef, $(3i(\folF)K_\folF + K_X)\cdot K_\folF \geq 0$ which implies that $K_\folF \cdot K_X \geq -3i(\folF) K_\folF$. Since $i(\folF)$ is fixed, $K_\folF \cdot K_X$ can assume only a finite number of values. 

    \end{proof}

\subsection{A bounded family with unbounded genus}

Next, we present an example of a collection of algebraically integrable foliations that belong to a bounded family even though the leaves have unbounded genus. 

\begin{example}
Let $E$ be an elliptic curve, $P \coloneqq E \times E$. We can consider two different morphisms of $P$ onto $E$: besides the coordinate projections (we call $\pi_x$ and $\pi_y$ the projections onto the first and second coordinate, respectively), the $n$-multiplication map on $E$, $[n] \colon x \mapsto n \cdot x$, allows us to view its graph $\Gamma_n = \{(x,n \cdot x) \mid x \in E \}$ as a subvariety of $P$, isomorphic to $E$. Then, all the translations $(0, y_0) + \Gamma_n$ of $\Gamma_n$ form a family of disjoint curves, isomorphic to $E$ and covering $P$. Thus, we get another projection $P \rightarrow E$, defined by sending a point $(x,y)$ to $y-n \cdot x$; we call this projection $\pi_n$. 

For a suitable divisor D of degree $d>1$ on $E$, let $A\sim \pi_x^*(2D)+\pi_y^*(2D)$ be a very ample divisor on $P$, which we can choose smooth, reduced and irreducible by Bertini's theorem. Let $S$ be the double cover of $P$ ramified along $A$, $\sigma \colon S \rightarrow P$ the covering map, and $f_n$ the composition $\pi_n \circ \sigma$. We have that $K_S\sim_{\mathbb{Q}}\sigma_n^*(K_P+\frac{1}{2}A)$; in this case, since both $K_E$ and $K_P$ are trivial, we get that $K_{S/E} = K_{S} = \sigma^*(\frac{1}{2}A)$. We consider the foliation $\mathcal{F}_n$ induced by $f_n$: as both $P$ and $A$ are smooth, both $S$ and the fibers of $f_n$ are smooth as well, hence $\folF_n$ is regular and has canonical singularities. The fibers are reduced, so that $K_{\mathcal{F}_n} = K_{S/E}$; then, $K_{\mathcal{F}_n}^2 =  K_{\mathcal{F}_n} \cdot K_S = (\sigma^*(\frac{1}{2}A))^2= \frac{1}{2}A^2$. Note that the genus of the fibers of $f_n$ depends on $n$: in fact, if $F$ is a fiber of $\pi_n$, we have that $A \cdot F = 2d(n^2+1)$. Then, the Riemann-Hurwitz formula implies that the genus of a fiber $F_n$ of $f_n$ is equal to $d(n^2+1)+1$. Still, the family of foliated surfaces is bounded because $K_{\folF_n}^2=K_{\folF_n}\cdot K_S$ is independent of $n$, and $S$ is smooth.

\end{example}

\subsection{Special types of surfaces}
Although it is reasonable to expect that Theorem \ref{theorem:main} does not hold without the condition on $i(\folF)$, it is automatically satisfied under some assumptions on the underlying surfaces. A first example of this is given by surfaces with big and nef anticanonical divisor: while a priori they are unbounded even if their volume $K_X^2$ is fixed, their index is bounded when $K_\folF^2$ and $K_\folF \cdot K_X$ are fixed.
    \begin{theorem}
    \label{thm:i1}
        Let $\mathcal{M}^{N}_{k_1,k_2}$  be the collection of $\mathbb{Q}$-Gorenstein canonical models of general type $(X,\folF)$ such that $-K_X$ is big and nef, $K_\folF^2=k_1$, $K_\folF \cdot K_X = k_2$. Then, there exists an integer $N_F>0$ such that $i(\folF) \leq N_F$ for all $(X,\folF) \in \mathcal{M}^{N}_{k_1,k_2}$.
    \end{theorem}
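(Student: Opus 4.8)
The plan is to reduce the statement to a local bound at the foliation‑terminal singularities and then combine a uniform pseudoeffectivity estimate with McQuillan's classification of these singularities. Since $i(\folF)$ is the least common multiple of the local indices of $K_\folF$, Proposition \ref{prop:lansings} shows that it suffices to produce a bound, depending only on $k_1,k_2$, on the index $n$ of every foliation‑terminal singularity of every $(X,\folF)\in\mathcal{M}^{N}_{k_1,k_2}$, because the local index is at most $2$ at all other $\mathbb{Q}$-Gorenstein foliation singularities.

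First the numerical preliminaries. By Lemma \ref{lemma:canample} we have $k_1=K_\folF^2>0$, and $K_X^2>0$ because $-K_X$ is big and nef. Applying the Hodge index theorem (Lemma \ref{lemma:hodge}) to $K_\folF$ and $K_X$, for which $K_\folF^2>0$, gives $k_1K_X^2\le k_2^2$, hence
$$0<K_X^2\le k_2^2/k_1 ;$$
in particular $k_2\neq 0$, and since $K_\folF$ and $-K_X$ are nef we conclude $k_2=K_\folF\cdot K_X<0$.

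Next I would prove a uniform pseudoeffectivity bound: the class $\Theta K_\folF+K_X$ is pseudoeffective for every $(X,\folF)\in\mathcal{M}^{N}_{k_1,k_2}$, where $\Theta:=2|k_2|/k_1$ depends only on $k_1,k_2$. Set $\theta:=\min\{\,t\ge 0:\ tK_\folF+K_X\text{ is pseudoeffective}\,\}$; this minimum exists because $K_\folF$ is big (so the set is nonempty), $\overline{\mathrm{Eff}}(X)$ is closed, and adding a multiple of the nef divisor $K_\folF$ preserves pseudoeffectivity (so the set is $[\theta,\infty)$), and it is positive because $K_X$ itself is not pseudoeffective ($-K_X$ being big). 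Pairing the pseudoeffective class $\theta K_\folF+K_X$ with the nef class $K_\folF$ yields $\theta k_1+k_2\ge 0$, i.e. $\theta\ge|k_2|/k_1$. On the other hand $\theta K_\folF+K_X$ is not big, by minimality of $\theta$; as a pseudoeffective $\mathbb{R}$-divisor $D$ on a surface satisfies $D^2\le\mathrm{vol}(D)$ (via the Zariski decomposition) and is big exactly when $\mathrm{vol}(D)>0$, we get $(\theta K_\folF+K_X)^2\le 0$, that is $\theta^2k_1+2\theta k_2+K_X^2\le 0$. Since $k_1>0$ this confines $\theta$ between the two roots of the quadratic, so $\theta\le(-k_2+\sqrt{k_2^2-k_1K_X^2})/k_1$, and using $0<K_X^2\le k_2^2/k_1$ one obtains $\theta\le 2|k_2|/k_1=\Theta$. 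As $\overline{\mathrm{Eff}}(X)$ is closed and $K_\folF$ is nef, $\Theta K_\folF+K_X=(\Theta-\theta)K_\folF+(\theta K_\folF+K_X)$ is pseudoeffective; thus $-K_X$ is uniformly dominated, in the pseudoeffective order, by a fixed multiple of $K_\folF$.

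To finish, let $x\in X$ be a foliation‑terminal singularity of index $n$. By the formal classification of such singularities (\cite[Corollary I.2.2 and Fact I.2.4]{mcquillan08}) the germ $(X,\folF,x)$ is a cyclic quotient of a smooth foliated germ; hence $X$ has a cyclic quotient singularity at $x$, and on the minimal resolution $\mu\colon Y\to X$ of $X$ there is an exceptional divisor $E$ over $x$ whose surface discrepancy $a(E,X)$ has the form $-1+O(1/n)$ and whose foliation discrepancy $a(E,X,\folF)$ is $O(1/n)$. Because $-K_X$ is big and nef, $-K_Y$ is big, so $Y$ is a rational surface, and by birational invariance of volumes $\mathrm{vol}(-K_Y)=K_X^2$ and $\mathrm{vol}(K_{\folF_Y})=k_1$ for $\folF_Y=\mu^{*}\folF$. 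Writing $\mu^{*}K_X=K_Y-\sum a(E_i,X)E_i$ and $\mu^{*}K_\folF=K_{\folF_Y}-\sum a(E_i,X,\folF)E_i$ and pulling the pseudoeffective class $\Theta K_\folF+K_X$ back to $Y$, the discrepancy estimates above together with the volume bounds should force a numerical inequality that fails once $n$ exceeds a bound depending only on $k_1,k_2$, which produces the constant $N_F$. I expect this last step — converting the local data from the classification into a contradiction with the fixed invariants — to be the main obstacle; it is exactly here that the hypothesis that $-K_X$ is big and nef is essential, since it both makes $Y$ rational and pins $K_X^2$ between two explicit bounds. The cases of Theorem \ref{thm:i2} (surfaces with $K_X\equiv 0$, and algebraically integrable foliations induced by fibrations with reduced fibres) would be treated by the analogous, but separate, structural inputs available in those settings.
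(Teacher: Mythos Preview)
Your proposal does not close: the final step, which you yourself flag as ``the main obstacle,'' is never carried out, and I do not see how the ingredients you assemble would yield it. The pseudoeffectivity of $\Theta K_\folF+K_X$ is a global statement; pulling it back to a resolution and writing discrepancies does not by itself force any contradiction with a single large local index. Moreover, two of the auxiliary claims are not correct as stated: volumes are birationally invariant for a fixed divisor class, not for canonical classes, so $\mathrm{vol}(-K_Y)=K_X^2$ and $\mathrm{vol}(K_{\folF_Y})=k_1$ are unjustified (on the minimal resolution $-K_Y=-\mu^*K_X+\sum a(E_i,X)E_i$ with $a(E_i,X)\le 0$, so typically $\mathrm{vol}(-K_Y)<K_X^2$); and the asserted discrepancy estimates $a(E,X)=-1+O(1/n)$, $a(E,X,\folF)=O(1/n)$ are not uniform over the cyclic quotient types $\tfrac{1}{n}(1,q)$ that occur as foliation--terminal points.

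The paper's argument is short and uses the hypothesis that $-K_X$ is big and nef in a completely different way: since $X$ is klt, Kawamata--Viehweg vanishing gives $\chi(\mathcal{O}_X)=1$ and $\chi(K_\folF)=h^0(K_\folF)\ge 0$. Plugging this into the singular Riemann--Roch formula (Theorem~\ref{thm:rr}) yields
\[
-\sum_{x\in\mathrm{Sing}\,X} a(x,K_\folF)\;=\;\tfrac{1}{2}K_\folF\!\cdot\!(K_\folF-K_X)\;-\;h^0(K_\folF)\;+\;1\;\le\;\tfrac{1}{2}(k_1-k_2)+1,
\]
so the left side takes only finitely many nonnegative values. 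By Proposition~\ref{prop:lansings} each terminal point contributes $\tfrac{n_x-1}{2n_x}\ge\tfrac{1}{4}$ and each dihedral point contributes $\tfrac{1}{2}$; hence the number of non--Gorenstein singularities is bounded, and then $\sum_{x}1/n_x$ takes only finitely many values, which (via \cite[Lemma~3.4]{langer21}) bounds each $n_x$. Your pseudoeffectivity--threshold detour is not needed, and the missing step in your outline is precisely the content that this vanishing/Riemann--Roch argument supplies.
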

    
    \begin{proof} Since $(X,\folF)$ is a $\mathbb{Q}$-Gorenstein canonical model, $X$ is klt. Then, as $-K_X$ is big and nef, the Kawamata-Viehweg theorem implies that  $$\chi(K_\folF)=\chi(K_X+(K_\folF-K_X))=h^0(K_\folF),$$ and $$\chi(\mathcal{O}_X)=\chi(K_X+(-K_X))=1.$$ Hence, from Theorem \ref{thm:rr},
    $$-\sum_{x \in \text{Sing}X} a(x,K_\folF) = \frac{1}{2}K_\folF \cdot(K_\folF - K_X) - h^0(K_\folF)+1.$$
    Since $h^0(K_\folF)$ is a nonnegative integer, $-\sum a(x,K_\folF)$ is bounded from above and it can assume only a finite number of values, as it is nonnegative by Proposition \ref{prop:lansings}. Then, we can suppose that $-\sum a(x,K_\folF)$ is fixed, so that we can argue as in \cite[Proposition 4.1]{langer21} to show that the number of non-Gorenstein singularities of $\folF$ is bounded: let $\Sigma = \Sigma_1 \cup \Sigma_2$ be the set of singular points of $\folF$, where $\Sigma_1$ are terminal singularities and $\Sigma_2$ are the dihedral quotient singularities. Then,
    $$-\sum_{x \in \Sigma} a(x,K_\folF) = \sum_{x\in \Sigma_1} \frac{n_x-1}{2n_x}+\sum_{x \in \Sigma_2} \frac{1}{2} \geq \frac{1}{2}|\Sigma|,$$
    where $n_x$ is the index of the cyclic quotient singularity at $x$. This shows that $|\Sigma|$ is bounded, which implies that
    $$\sum_{x \in \Sigma_1} \frac{1}{n_x} = |\Sigma| + 2\sum_{x \in \Sigma}a(x,K_\folF)$$
    can assume only finitely many values. By \cite[Lemma 3.4]{langer21}, then $n_x$ must be bounded.

    \end{proof}

The second class of cases is given by surfaces which can be shown to belong to a bounded family under the numerical assumptions on $K_\folF$.

\begin{theorem}
\label{thm:i2}
Fix $k_1,k_2$. Then, the $\mathbb{Q}$-Gorenstein canonical models $(X,\folF)$ of general type with $K_\folF^2 = k_1, K_\folF \cdot K_X= k_2$, whose underlying surface belongs to one of the following, are bounded.
\begin{enumerate}[label=\textnormal{(\roman*)}]
\item{Surfaces with $K_X \equiv 0$.}
\item{Algebraically integrable foliations induced by fibrations with reduced fibers.}
\end{enumerate}
\end{theorem}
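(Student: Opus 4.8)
The plan, in each case, is to show that the index $i(\folF)$ --- which equals $i_\mathbb{Q}(\folF)$ since $(X,\folF)$ is $\mathbb{Q}$-Gorenstein --- is bounded in terms of $k_1$ and $k_2$ alone, and then to appeal to the results already proved. Indeed, once $i(\folF)$ is bounded, $i(\folF)K_\folF$ is an ample Cartier divisor (by Lemma \ref{lemma:canample} and the Nakai--Moishezon criterion), and by Theorem \ref{theorem:main} the Hilbert functions $P(m)=\chi(mK_\folF)$ --- hence the polynomials $m\mapsto\chi(m\,i(\folF)K_\folF)=P(m\,i(\folF))$ --- lie in a finite set; applying \cite[Theorem 2.1.2]{kollar85} exactly as in the proof of Theorem \ref{theorem:main}, the polarised surfaces $(X,i(\folF)K_\folF)$ form a bounded family, so $K_\folF$ lies in a bounded set of divisor classes and the foliation $\folF\subset T_X$ varies in a bounded family as well. (In particular this shows the condition on $i_\mathbb{Q}(\folF)$ in Theorem \ref{theorem:main} and Corollary \ref{cor:chen} is redundant under the extra hypotheses here.)

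For $K_X\equiv 0$ I would follow the proof of Theorem \ref{thm:i1}. Since $X$ is klt with numerically trivial canonical class, $\chi(\bigO_X)=1-q(X)+h^0(K_X)$ takes one of finitely many values ($q(X)\le 2$ and $h^0(K_X)\le 1$), and Kawamata--Viehweg vanishing --- applied to $mK_\folF\sim_\mathbb{Q} K_X+mK_\folF$ with $mK_\folF$ ample --- gives $h^i(mK_\folF)=0$ for $i>0$, $m>0$. Evaluating Theorem \ref{thm:rr} at $m=1$ then yields
\[
-\sum_{x\in\mathrm{Sing}X}a(x,K_\folF)=\tfrac12 k_1+\chi(\bigO_X)-h^0(K_\folF)\le\tfrac12 k_1+\chi(\bigO_X),
\]
which is bounded above and, being non-negative by Proposition \ref{prop:lansings}, takes finitely many values; the argument of \cite[Proposition 4.1]{langer21} then bounds the number of non-Gorenstein foliation singularities, hence $\sum 1/n_x$, hence each $n_x$ via \cite[Lemma 3.4]{langer21}, so $i(\folF)$ is bounded.

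For $\folF$ induced by a fibration $f\colon X\to B$ over a smooth curve $B$ with reduced fibres, reducedness gives $K_\folF\sim K_{X/B}=K_X-f^*K_B$; since $f^*K_B$ is Cartier, $mK_\folF$ is Cartier at a point exactly when $mK_X$ is, so $i(\folF)=i(K_X)$, and $K_X^2=(K_\folF+f^*K_B)^2=2k_2-k_1$ is fixed. Letting $g$ be the genus of a general smooth fibre $F$ and $g_B$ that of $B$, one has $K_\folF\cdot F=2g-2$ and $(2g-2)(2g_B-2)=K_\folF\cdot f^*K_B=k_2-k_1$. If $g_B\ne 1$, this bounds $g$ and $g_B$, so $B$ has bounded genus and $X$ --- the total space of a fibration of bounded fibre genus over a bounded base with $\omega_{X/B}$ relatively ample, reduced fibres and fixed $K_{X/B}^2=k_1$ --- lies in a bounded family; hence $i(K_X)=i(\folF)$ is bounded. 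If $g_B=1$ then $K_B\sim 0$, so $K_\folF\sim K_X$ is ample with fixed volume $K_X^2=k_1$, making $X$ a stable surface of fixed volume, hence bounded, so again $i(\folF)$ is bounded. In all subcases we conclude by the first paragraph; alternatively, once $X$ and $K_\folF$ are bounded one gets the boundedness of $(X,\folF)$ directly.

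The main obstacle is the case $g_B=1$: there the relation $(2g-2)(2g_B-2)=k_2-k_1$ is vacuous and the fibre genus $g$ is genuinely unbounded --- the double covers of $E\times E$ discussed above realise exactly this --- so the fibre geometry no longer controls $X$, and boundedness must instead be derived from the boundedness of stable surfaces of fixed volume. The remaining ingredients --- the vanishing and the bound on $\chi(\bigO_X)$ in the first case, the boundedness of the fibrations when $g_B\ne 1$ (precisely, the estimate on $\deg f_*\omega_{X/B}^{\otimes m}$ and the handling of non-Gorenstein fibres), and the bookkeeping in the reduction step --- are routine.
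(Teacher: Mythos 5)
Your overall strategy is genuinely different from the paper's: you reduce everything to bounding the index $i(\folF)$ and then re-enter through Theorem \ref{theorem:main} and Koll\'ar's theorem, whereas the paper never bounds the index directly --- for (i) it observes that $K_\folF\equiv K_\folF+K_X$ is an ample polarisation on a klt Calabi--Yau surface and cites \cite[Corollary 1.6]{birkar23} (equivalently, that $(X,\folF)$ is an $\epsilon$-adjoint canonical model and \cite[Theorem 6.1]{spicer22} applies), and for (ii) it reduces to Alexeev's boundedness of klt log surfaces with ample log canonical class of fixed volume \cite[Theorem 9.2]{ale94}. Your part (i) does work as an alternative: with $K_X\equiv 0$ one has $mK_\folF-K_X$ ample, Kawamata--Viehweg applies, $\chi(\bigO_X)$ is bounded for klt surfaces with numerically trivial canonical class, and the singularity-counting argument of Theorem \ref{thm:i1} transfers verbatim. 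This buys a proof that stays inside the paper's own toolbox, at the cost of being longer than the paper's one-line citation.

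Part (ii), however, has a concrete gap in the case $g_B=0$ (and, as written, $g_B\ge 2$, though there the fix is immediate). After fixing $K_X^2=2k_2-k_1$ and bounding $g$ and $g_B$, you assert that a fibration over a bounded base with bounded fibre genus, reduced fibres, relatively ample $\omega_{X/B}$ and fixed $K_{X/B}^2$ ``lies in a bounded family,'' deferring the estimate on $\deg f_*\omega_{X/B}^{\otimes m}$ as routine. It is not: $X$ is only klt, so $\omega_{X/B}^{[m]}$ is not locally free, Grothendieck--Riemann--Roch acquires correction terms at the non-Gorenstein points, and controlling those terms is essentially the problem you are trying to solve (bounding the singularities/index); moreover $\chi(\bigO_X)$ is not yet known to be bounded at this stage. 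For $g_B\ge 1$ you can sidestep this entirely, since $K_X=K_\folF+f^*K_B$ is then ample of fixed volume and Alexeev applies (your $g_B=1$ subcase is an instance of this; note these surfaces are klt canonical models, not merely ``stable''). For $g_B=0$ the paper's device is what you are missing: pick four distinct general fibres and set $D=\tfrac12(F_1+F_2+F_3+F_4)\sim_\mathbb{Q}-f^*K_B$, so that $(X,D)$ is klt by \cite[Lemma 5.17]{km98} and $K_X+D\sim_\mathbb{Q}K_\folF$ is ample with $(K_X+D)^2=k_1$; then \cite[Theorem 9.2]{ale94} bounds the pairs $(X,D)$. With that substitution your argument closes.
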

\hfill
\begin{proof}
Note that in the following, $X$ is again klt because $(X,\folF)$ is $\mathbb{Q}$-Gorenstein.

\begin{enumerate}[label=\textnormal{(\roman*)}]

    \item{Since $K_\folF$ is big and nef and $K_\folF^2$ is fixed, the result follows from \cite[Corollary 1.6]{birkar23}. Equivalently, it can be seen that in this case $(X,\folF)$ is an $\epsilon$-adjoint canonical model (\cite[Definition 3.6]{spicer22}) with $\delta=1$, then the result follows from \cite[Theorem 6.1]{spicer22}.}

    \item{ Let $f \colon X \rightarrow C$ be a fibration with reduced fibers, $\folF$ the induced foliation, hence $K_\folF=K_{X/C}=K_X-f^*K_C$. Let $F$ be a general fiber of $f$, then
$$K_\folF^2=K_{X/C}^2=K_X^2-8(g(F)-1)(g(C)-1),$$
and
$$K_\folF \cdot K_X=K_X^2-4(g(F)-1)(g(C)-1).$$
Since $K_\folF^2$ and $K_\folF \cdot K_X$ are constant, $K_X^2$ is constant as well, because $$K_X^2= 2(K_\folF \cdot K_X) - K_\folF^2.$$ 
If $g(C)\geq 1$, $K_X$ is ample as well, hence $X$ is a canonical model with fixed volume $K_X^2$. When $g(C)=0$, consider the linear system $|-f^*K_C|$, which is basepoint free. Let $D_1,D_2 \in |-f^*K_C|$, $D_1=F_1+F_2$, $D_2= F_3+F_4$ be two general members with $F_1,\ldots,F_4$ distinct fibers of $f$, so that $-f^*K_C \sim_\mathbb{Q} \frac{1}{2}(D_1+D_2)$. Let $D = \frac{1}{2}(D_1+D_2)$, then by \cite[Lemma 5.17]{km98}, $(X,D)$ is klt with $(K_X+D)^2=K_\folF^2$ fixed. In both cases, the underlying surfaces are bounded by \cite[Theorem 9.2]{ale94}.
}
\end{enumerate}

\end{proof}

\end{document}